\newcommand{\Cov}{\mathrm{Cov}}
\newcommand{\M}{\mathcal{M}}
\newtheorem{thm}{Theorem}[section]
\newtheorem{cor}[thm]{Corollary}
\newtheorem{lem}[thm]{Lemma}
\newtheorem{prop}[thm]{Proposition}
\theoremstyle{definition}
\newtheorem{dfn}[thm]{Definition}
\newtheorem{algo}{Algorithm}
\newtheorem{conj}{Conjecture}
\newcommand{\pa}{\mathrm{pa}}
\newcommand{\sib}{\mathrm{sib}}
\newcommand{\B}{\mathcal{B}}
\newcommand{\D}{\mathcal{D}}
\newcommand{\I}{\mathcal{I}}
\newcommand{\R}{\mathbb{R}}
\theoremstyle{definition}
\newtheorem{ex}[thm]{Example}
\begin{document}

% "Title of the Paper"
\title{Algebraic Properties of Gaussian HTC-identifiable Graphs}
\author{Bohao Yao, Robin Evans}

\maketitle

\begin{abstract}
In this paper, we explore some algebraic properties of linear structural equation models that can be represented by an HTC-identifiable graph. In particular, we prove that all mixed graphs are HTC-identifiable if and only if all the regression coefficients can be recovered from the covariance matrix using straightforward linear algebra operations. We also find a set of polynomials that generates the ideal that encompasses all the equality constraints of the model on the cone of positive definite matrices. We further prove that this set of polynomials are the minimal generators of said ideal for a subset of HTC-identifiable graphs.
\end{abstract}

\section{Introduction}

It is often natural to model the joint distribution of a random vector $X=(X_1,,\dots,X_n)^T$ as a collection of noisy linear interdependencies. In particular, we may assume that each variable $X_i$ can be expressed as a linear function of the remaining variables and a stochastic noise term $\epsilon_i$,
\begin{align}
X_i=\sum\limits_{j\neq i}\lambda_{ji}X_j+\epsilon_i.
\end{align}
Models of this form are known as \emph{linear structural equation models} (SEMs). We often represent SEMs using graphical models, where each vertex corresponds to a random variable.

\subsection{Directed Mixed Graphs}
A \emph{directed mixed graph} is a triple $G=(V,\D,\B)$ where $V$ is a set of vertices, $\D$ is the set of directed edges ($\rightarrow$) and $\B$ is the set of bidirected edges ($\leftrightarrow$).
Edges in $\D$ are oriented whereas edges in $\B$ have no orientation.
A \emph{loop} is an edge joining a vertex to itself.
In this paper, we will only consider graphs without any loops, that is $i\to i\notin\D$ and $i\leftrightarrow i\notin\B$. 
If $i\rightarrow j\in\D$, we say $i$ is a \emph{parent} of $j$ and $j$ is a \emph{child} of $i$. If $i\leftrightarrow j\in\B$, we say that $i$ and $j$ are \emph{siblings}. 

A \emph{directed walk} of length $\ell$ is a sequence of $\ell+1$ adjacent vertices $v_0,\dots,v_\ell$, each connected by a directed edge $v_i\to v_{i+1}$.
A \emph{path} is a walk where all vertices are distinct.
A \emph{directed path} of length $\ell$ is a path of the form $v_0\rightarrow v_1\rightarrow \dots\rightarrow v_\ell$. Similarly, a \emph{bidirected path} of length $\ell$ is a path of the form $v_0\leftrightarrow v_1\leftrightarrow \dots\leftrightarrow v_\ell$.
A directed walk of length $\ell\geq 3$ is a \emph{directed cycle} if $v_0=v_\ell$ and all other vertices $v_j$ are distinct (for $0<j<\ell$).
An \emph{acyclic directed mixed graph} (ADMG) is a directed mixed graph without any directed cycles. 
A \emph{directed acyclic graph} (DAG) is an ADMG without any bidirected edges. 

\begin{dfn}
Let $i,j\in V$. If there is both a directed and a bidirected edge connecting $i$ and $j$, we say that $i$ and $j$ form a \emph{bow}. If a mixed graph has no bows, we say that the graph is \emph{bow-free}.
\end{dfn}

\subsubsection{Structural Equation Model}

Consider a directed mixed graph $G=(V,\D,\B)$ where the functional relations are linear and the error terms jointly Gaussian. This allows us to construct a structural equation model of the form 
\begin{align}
\label{eqn: SEM}
X_i=\sum\limits_{j\in\pa(i)}\lambda_{ji}X_j+\epsilon_i,
\end{align}
where each $\lambda_{ji}$ is the regression coefficient of $X_j$ on $X_i$ and each $\epsilon_i$ is a (not necessarily independent) centered Gaussian random variable.
Let $\Lambda=(\lambda_{ij})\in\R^{|V|\times |V|}$ be the matrix holding the unknown coefficients, and $\epsilon=(\epsilon_i)$ be the random error vector. We can rewrite the system of structural equations as
$$X=\Lambda^T X+\epsilon.$$

%Suppose $\Omega$ is positive definite and 
We have now constructed a structural equation model from a directed mixed graph where $\lambda_{ij}=0$ if the edge $i\to j\notin\D$. This gives us a natural definition for a set containing all possible such $\Lambda$,
$$\R^\D:=\{\Lambda\in\R^{|V|\times |V|}:\lambda_{ij}=0\text{ if }i\to j\notin\D\}.$$
Let $\R^\D_{\text{reg}}$ be the subset of matrices $\Lambda\in\R^\D$ for which $(I-\Lambda)$ is invertible. Note that if $G$ is an ADMG, then given any topological ordering we have $\Lambda$ strictly upper triangular, hence $(I-\Lambda)$ is trivially invertible. Working on this subset, we obtain a unique solution to the structural equations: $$X=(I-\Lambda)^{-1}\epsilon.$$

Let $\Omega=(\omega_{ij})=\Cov[\epsilon]\in\R^{|V|\times |V|}$ be a covariance matrix of $\epsilon$ with $\omega_{ij}=0$ if $i\leftrightarrow j\notin\B$. Similarly, we can define a set containing all possible such matrices $\Omega$,
$$PD(\B):=\{\Omega\in PD_V:\omega_{ij}=0\text{ if }i\neq j\text{ and }i\leftrightarrow j\notin\B\}$$
where $PD_V$ is the cone of positive definite $|V|\times |V|$ matrices.
Hence, $X$ has the covariance matrix
\begin{align}
\label{eqn: main}
\Sigma:=\Cov[X]=(I-\Lambda)^{-T}\Omega(I-\Lambda)^{-1}.
\end{align}
By Cramer's rule \cite{artin2011algebra}, entries in the covariance matrix in (\ref{eqn: main}) are rational functions of the entries in $\Lambda$ and $\Omega$. We define the linear structural equation model given by a directed mixed graph $G=(V,\D,\B)$ to be the family of all multivariate normal distributions on $\R^{|V|}$ with covariance matrix in the set
$$\mathcal{M}(G):=\{(I-\Lambda)^{-T}\Omega(I-\Lambda)^{-1}:\Lambda\in\R^\D_{\text{reg}},\Omega\in PD(\B)\}.$$

\subsubsection{Treks}
A vertex $i$ on a walk $\pi$ is called a \emph{collider on $\pi$} if the edges preceding and succeeding $i$ on the walk $\pi$ both have an arrowhead at $i$ (i.e. $\to i\leftarrow$, $\to i\leftrightarrow$, $\leftrightarrow i\leftarrow$, $\leftrightarrow i\leftrightarrow$).
A \emph{trek} from $i$ to $j$ is a walk from $i$ to $j$ without any colliders. Therefore, all treks are of the form
$$v^L_\ell\leftarrow v^L_{\ell-1}\leftarrow\dots\leftarrow v^L_1\leftarrow v^L_0\leftrightarrow v^R_0\to v^R_1\to\dots\to v^R_{r-1}\to v^R_r$$ or
$$v^L_\ell\leftarrow v^L_{\ell-1}\leftarrow\dots\leftarrow v^L_1\leftarrow v_0^{LR}\to v^R_1\to\dots\to v^R_{r-1}\to v^R_r,$$
where $v^L_\ell=i$, $v^R_r=j$. For a trek $\pi$, we denote $\pi^L$ to be the directed walk on the left hand side of the trek, containing all the vertices with superscripts $L$. For example, in the first case, we have $\pi^L=v^L_\ell\leftarrow v^L_{\ell-1}\leftarrow\dots\leftarrow v^L_1\leftarrow v^L_0$; whereas in the second case, we have $\pi^L=v^L_\ell\leftarrow v^L_{\ell-1}\leftarrow\dots\leftarrow v^L_1\leftarrow v_0^{LR}$. Similarly, we denote $\pi^R$ to be the directed walk containing all the vertices with superscripts $R$. A \emph{half-trek} is a trek with $|\pi^L|=0$. Note that the vertex $v^{LR}_0$ in the second case is a part of both $\pi^L$ and $\pi^R$. A set of treks $\Pi=\{\pi_1,\pi_2,\dots\}$ is said to have no \emph{sided intersection} if $\pi^L_i\cap\pi^L_j=\pi^R_i\cap\pi^R_j=\emptyset$ for all $i\neq j$.

Let $\mathcal{T}_{vw}$ be the set of all treks from $v$ to $w$. For a trek $\pi$ with no bidirected edges and a source $i$, we define the trek monomial as $$\pi(\Lambda,\Omega)=\omega_{ii}\prod\limits_{x\to y\in\pi}\lambda_{xy}.$$
For a trek $\pi$ with a bidirected edge $i\leftrightarrow j$, we define the trek monomial as 
$$\pi(\Lambda,\Omega)=\omega_{ij}\prod\limits_{x\to y\in\pi}\lambda_{xy}.$$

\begin{thm}[Trek rule]
The covariance matrix $\Sigma$ for a mixed graph $G$ is given by
$$\sigma_{vw}=\sum\limits_{\pi\in\mathcal{T}_{vw}}\pi(\lambda,\omega).$$
\end{thm}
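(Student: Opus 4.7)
The plan is to start from the closed form $\Sigma = (I-\Lambda)^{-T}\Omega(I-\Lambda)^{-1}$ and expand the inverse as a Neumann/power series, then read off the coefficients as sums over directed walks. In the ADMG case $\Lambda$ is nilpotent under any topological ordering, so $(I-\Lambda)^{-1}=\sum_{k\ge 0}\Lambda^k$ is a finite sum and the identity is literal; in the general cyclic case I would work in the ring of formal power series in the indeterminates $\lambda_{ij},\omega_{ij}$, where the same expansion is valid, and then invoke the fact that two rational functions which agree as formal power series agree as functions on the open set where $I-\Lambda$ is invertible.

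The combinatorial core of the argument is the standard identification
\[
(\Lambda^k)_{ij}=\sum_{i=v_0\to v_1\to\cdots\to v_k=j}\prod_{t=0}^{k-1}\lambda_{v_t v_{t+1}},
\]
i.e.\ the $(i,j)$ entry of $\Lambda^k$ is the generating sum over directed walks of length $k$ from $i$ to $j$ weighted by the product of the corresponding $\lambda$'s. Summing over $k$, each entry $((I-\Lambda)^{-1})_{ij}$ equals the sum over all directed walks from $i$ to $j$ of the product of their edge weights, where the empty walk contributes $1$ on the diagonal.

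Next I would substitute this expansion into
\[
\sigma_{vw}=\sum_{i,j}\bigl((I-\Lambda)^{-1}\bigr)_{iv}\,\omega_{ij}\,\bigl((I-\Lambda)^{-1}\bigr)_{jw},
\]
and interpret each nonzero term combinatorially. Fix $i,j$ with $\omega_{ij}\ne 0$. If $i\ne j$, then $i\leftrightarrow j\in\B$, and the term corresponds to choosing a directed walk $\pi^L$ from $i$ up to $v$ (read in reverse it is $v^L_\ell\leftarrow\cdots\leftarrow v^L_0=i$) together with a directed walk $\pi^R$ from $j$ to $w$, joined by the bidirected edge at the top; this is precisely the first type of trek in the definition, with monomial $\omega_{ij}\prod_{x\to y\in\pi}\lambda_{xy}$. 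If $i=j$, then the factor is $\omega_{ii}$ and the pair of walks shares a common source $v^{LR}_0=i$, giving exactly the second type of trek with monomial $\omega_{ii}\prod_{x\to y\in\pi}\lambda_{xy}$. Conversely every trek in $\mathcal{T}_{vw}$ arises uniquely this way, so rearranging the double sum exactly reproduces $\sum_{\pi\in\mathcal{T}_{vw}}\pi(\Lambda,\Omega)$.

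The only real obstacle is bookkeeping: making sure the two families of treks in the definition are matched cleanly to the diagonal ($i=j$) and off-diagonal ($i\ne j$) contributions of $\Omega$, and that the convention ``$\pi^L$ and $\pi^R$ share $v_0^{LR}$ in the second case'' is correctly accounted for so that each trek is counted exactly once with the correct monomial. Convergence is a minor technical point handled by passing to formal power series, since termwise agreement of the two expansions is automatic once the walk decomposition is set up.
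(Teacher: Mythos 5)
Your argument is correct and is essentially the standard proof of the trek rule found in the references the paper cites for this statement (the paper itself gives no proof beyond pointing to Wright and to Spirtes et al.): expand $(I-\Lambda)^{-1}$ as a walk-generating series, substitute into $\sigma_{vw}=\sum_{i,j}((I-\Lambda)^{-1})_{iv}\,\omega_{ij}\,((I-\Lambda)^{-1})_{jw}$, and match the $i\neq j$ and $i=j$ terms to the two trek types. The only caveat worth recording is that in the cyclic case the right-hand side is an infinite series, so the identity should be read either formally or on the region where the Neumann series converges (e.g.\ spectral radius of $\Lambda$ less than one), rather than as an equality of rational functions on all of $\R^\D_{\mathrm{reg}}$; with that understanding your bijection between treks and pairs of directed walks joined at a top is exactly right.
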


\begin{proof}
Refer to Wright \cite{wright1934method} and Spirtes, Glymour and Scheines \cite{spirtes2000causation}.
\end{proof}

\subsection{Identifiability}

The properties of identifiability in SEMs is a topic with a long history. A review of the classical conditions of these properties can be found in Bollen \cite{bollen1989structural}. We shall start with the definition of \emph{global identifiability}.

\begin{dfn}
Let $\phi:\Theta\to N$ be a rational map defined everywhere on the parameter space $\Theta$ into the natural parameter space $N$ of an exponential family. The model $\M=\mathrm{im}\ \phi$ is said to be \emph{globally identifiable} if $\phi$ is a one-to-one map on $\Theta$.
\end{dfn}

In \cite{drton2011global}, Drton et al. explored some necessary and sufficient conditions for linear SEMs to be globally identifiable. Other recent works, such as those by Brito and Pearl \cite{brito2002new}, consider a weaker identifiability requirement of \emph{generic identifiability}.

\begin{dfn}
Let $\phi:\Theta\to N$ be a rational map defined everywhere on the parameter space $\Theta$ into the natural parameter space $N$ of an exponential family. The model $\M=\mathrm{im}\ \phi$ is said to be \emph{generically identifiable} if $\phi^{-1}(\phi(\theta))=\{\theta\}$ for almost all $\theta\in\Theta$ with respect to the Lebesgue measure.
\end{dfn}

As we seen in (\ref{eqn: main}), for linear SEMs, the model $\M(G)$ is \emph{generically identifiable} if the map $$\phi:(\Lambda,\Omega)\mapsto (I-\Lambda)^{-T}\Omega(I-\Lambda)^{-1}$$ is injective almost everywhere on $\Theta:=\R^\D_{\text{reg}}\times PD(\B)$. If at most $k$ parameters are mapped by $\phi$ to the same $\Sigma$ almost everywhere on $\Theta$, then we say that $\M(G)$ is \emph{$k$-identifiable}.

From the definitions, we see that all globally identifiable models are also generically identifiable. However, not all generically identifiable models are also globally identifiable as we shall see in the following example.

\begin{figure}
\centering
  \begin{tikzpicture}
  [rv/.style={circle, draw, thick, minimum size=6mm, inner sep=0.5mm}, node distance=15mm, >=stealth,
  hv/.style={circle, draw, thick, dashed, minimum size=6mm, inner sep=0.5mm}, node distance=15mm, >=stealth]
  \pgfsetarrows{latex-latex};
%      \begin{scope}
%   \node[rv]  (1)              {1};
%   \node[rv, right of=1, yshift=0mm, xshift=0mm] (2) {2};
%   \node[rv, right of=2, yshift=0mm, xshift=0mm] (3) {3};
%   \node[hv, above of=2, yshift=-5mm, xshift=7.5mm] (4) {4};
%   \draw[->, very thick, color=blue] (1) -- (2);
%   \draw[->, very thick, color=blue] (2) -- (3);
%   \draw[->, very thick, color=blue] (4) -- (3);
%   \draw[->, very thick, color=blue] (4) -- (2);
%   \end{scope}
  \begin{scope}%[xshift=6.5cm]
  \node[rv]  (1)              {1};
  \node[rv, right of=1, yshift=0mm, xshift=0mm] (2) {2};
  \node[rv, right of=2, yshift=0mm, xshift=0mm] (3) {3};
  \draw[->, very thick, color=blue] (1) -- (2);
  \draw[->, very thick, color=blue] (2) -- (3);
  \draw[<->, very thick, color=red] (2) to[bend left] (3);
  \end{scope}
     \end{tikzpicture}
 \caption{The instrumental variable model}
 \label{fig: IV}
\end{figure}
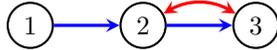

\begin{ex}
    Consider the instrumental variable model in Figure \ref{fig: IV}. We can recover the observable $\Lambda$ from the observed $\Sigma$ using $$\lambda_{12}=\frac{\sigma_{12}}{\sigma_{11}},\qquad\lambda_{23}=\frac{\sigma_{13}}{\sigma_{12}}.$$
    Note that the first denominator is always positive since $\Sigma$ is positive definite and the second denominator is zero if and only if $\lambda_{12}=0$. In particular, the map $\phi$ is injective on the set $\{\Lambda\times\Omega\mid\Lambda\in\R^\D_{\text{reg}}, \Omega\in PD(\B),\lambda_{12}\neq 0\}$. Therefore, while the instrumental variable model is generically identifiable, it is not globally identifiable as it is not injective on the whole of $\Theta$.
\end{ex}

The first class of graphs that were proven to be generically identifiable is the bow-free DAGs by Brito and Pearl \cite{brito2002new}. Foygel et al. \cite{foygel2012half} later introduced a new identifiability criterion known as HTC-identifiability and proved that all HTC-identifiable graphs are also generically identifiable. Note that HTC-identifiable graphs may contain directed cycles.

\begin{dfn}
A set of nodes $Y\subset V$ satisfies the \emph{half-trek criterion} with respect to $v\in V$ if
\begin{enumerate}
    \item $|Y|=|\pa(v)|$,
    \item $Y\cap(\{v\}\cup\sib(v))=\emptyset$, and
    \item there is a system of half-treks with no sided intersection from $Y$ to $\pa(v)$.
\end{enumerate}
\end{dfn}

\begin{dfn}
\label{dfn: HTC-identifiable}
We say that $\M(G)$ is \emph{HTC-identifiable} if there exists a family of subsets $\{Y_v:v\in V\}$ of the vertex set $V$ such that for each node $v\in V$, the vertex set $Y_v$ satisfies the half-trek criterion with respect to $v$ and there is a total ordering such that $w\prec v$ whenever $w\in Y_v\cap\text{htr}(v)$.
\end{dfn}

Note that all acyclic bow-free graphs are HTC-identifiable as we can simply choose $Y_v=\pa(v)$, hence the result obtained by Foygel et al.\ implies the previous work by Brito and Pearl \cite{brito2002new}. Furthermore, it is also shown that HTC-identifiable graphs can be decided in polynomial time \cite{foygel2012half}. While all HTC-identifiable graphs are also generically identifiable, it has been shown through simulation that the converse is not true in general. In the original paper, Foygel et al. classify graphs into three classes: HTC-identifiable, HTC-infinte-to-one and HTC-inconclusive. For the purpose of this paper, we will consider both HTC-infinite-to-one and HTC-inconclusive graphs simply as `not HTC-identifiable'.
%Ommen and Mooij \cite{van2018algebraic} then provided the constraints for an HTC-identifiable graph. In this paper, we will introduce a new identifiability criteria which we shall call linear identifiability. We will show that all linear identifiable graphs are also generically identifiable, and provide sufficient and necessary conditions for a graph to be linearly identifiable. We will also provide an algorithm to determine if a graph is linearly identifiable and provide constraints on all such graphs.

In this paper, we will explore the algebraic structure of HTC-identifiable graphs by introducing an identifiability criterion which we call \emph{linear identifiability}. Linearly identifiable graphs satisfy certain algebraic structures which could be used to recover $\Lambda$ from $\Sigma$ efficiently using linear algebra. We will then prove that linear identifiability and HTC-identifiability are equivalent, hence, we are not able to recover $\Lambda$ from $\Sigma$ using linear algebra from graphs that are not HTC-identifiable. In fact, we will give an example of an HTC-inconclusive graph that is generically identifiable in Example \ref{ex: HTC-inconclusive} and show that we have to solve a higher order polynomial equation in order to recover $\Lambda$.
\subsection{Constraints}

Consider two graphs $G$ and $G'$. If both $G$ and $G'$ are DAGs, then $\M(G)=\M(G')$ if and only if $G$ and $G'$ have the same d-separation relations \cite{geiger1990identifying}. However, this is only helpful in situations where all the variables are observed. If a graph $G$ has hidden variables, we can use the latent projection operation \cite{verma1991equivalence} to transform $G$ into an ADMG. In this case, the conditional independence relations on the observed vertices are no longer sufficient to describe $\M(G)$ in general.

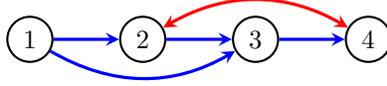
\begin{figure}
\centering
  \begin{tikzpicture}
  [rv/.style={circle, draw, thick, minimum size=6mm, inner sep=0.5mm}, node distance=15mm, >=stealth,
  hv/.style={circle, draw, thick, dashed, minimum size=6mm, inner sep=0.5mm}, node distance=15mm, >=stealth]
  \pgfsetarrows{latex-latex};
%   \begin{scope}
%   \node[rv]  (1)              {1};
%   \node[rv, right of=1, yshift=0mm, xshift=0mm] (2) {2};
%   \node[rv, right of=2, yshift=0mm, xshift=0mm] (3) {3};
%   \node[rv, right of=3] (4) {4};
%   \node[hv, above of=3, yshift=-5mm] (5) {5};
%   \draw[->, very thick, color=blue] (1) -- (2);
%   \draw[->, very thick, color=blue] (2) -- (3);
%   \draw[->, very thick, color=blue] (3) -- (4);
%   \draw[->, very thick, color=blue] (1) to[bend right] (3);
%   \draw[->, very thick, dashed, color=blue] (5) -- (2);
%   \draw[->, very thick, dashed, color=blue] (5) -- (4);
%   \end{scope}
  \begin{scope}%[xshift=6.5cm]
  \node[rv]  (1)              {1};
  \node[rv, right of=1, yshift=0mm, xshift=0mm] (2) {2};
  \node[rv, right of=2, yshift=0mm, xshift=0mm] (3) {3};
  \node[rv, right of=3] (4) {4};
  \draw[->, very thick, color=blue] (1) -- (2);
  \draw[->, very thick, color=blue] (2) -- (3);
  \draw[->, very thick, color=blue] (3) -- (4);
  \draw[->, very thick, color=blue] (1) to[bend right] (3);
  \draw[<->, very thick, color=red] (2) to[bend left] (4);
  \end{scope}
    \end{tikzpicture}
 \caption{The Verma graph.}
  \label{fig: Verma}
\end{figure}

\begin{ex}
\label{ex: Verma}
Consider the Verma graph in Figure \ref{fig: Verma} where vertex 5 is a latent variable. The graph on the right, $G$, represents the latent projection of the Verma graph. In both graphs, there are no conditional independences involving only the observed variables $X_1, X_2, X_3$ and $X_4$. However, we do have a constraint on the corresponding covariance matrix $\Sigma$, in the sense that $\Sigma\in\M(G)$ only if
\begin{align*}
f_{\mathrm{Verma}}(\Sigma)&=\sigma_{11}\sigma_{13}\sigma_{22}\sigma_{34}-\sigma_{12}^2\sigma_{13}\sigma_{34}-\sigma_{11}\sigma_{14}\sigma_{22}\sigma_{33}+\sigma_{12}^2\sigma_{14}\sigma_{33}\\ &\qquad-\sigma_{11}\sigma_{13}\sigma_{23}\sigma_{24}+\sigma_{11}\sigma_{14}\sigma_{23}^2+\sigma_{12}\sigma_{13}^2\sigma_{24}-\sigma_{12}\sigma_{13}\sigma_{14}\sigma_{23}=0.
\end{align*}
\end{ex}

\begin{figure}
\centering
  \begin{tikzpicture}
  [rv/.style={circle, draw, thick, minimum size=6mm, inner sep=0.5mm}, node distance=15mm, >=stealth,
  hv/.style={circle, draw, thick, dashed, minimum size=6mm, inner sep=0.5mm}, node distance=15mm, >=stealth]
  \pgfsetarrows{latex-latex};
  \begin{scope}
  \node[rv]  (1)              {1};
  \node[rv, right of=1, yshift=0mm, xshift=0mm] (2) {2};
  \node[rv, below of=1, yshift=0mm, xshift=0mm] (3) {3};
  \node[rv, below of=2, yshift=0mm, xshift=0mm] (4) {4};
  \draw[->, very thick, color=blue] (1) -- (3);
  \draw[->, very thick, color=blue] (2) -- (4);
  \draw[<->, very thick, color=red] (1) -- (2);
  \draw[<->, very thick, color=red] (1) -- (4);
  \draw[<->, very thick, color=red] (2) -- (3);
  \end{scope}
    \end{tikzpicture}
 \caption{The `gadget' graph.}
 \label{fig: gadget}
\end{figure}
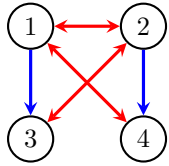

%Note that the latent projection operator does not change the equality constraints \cite{richardson2017nested}. %In Example \ref{ex: Verma}, if $\M(G)$ is a marginal model, we will have an additional inequality constraint.

%Those constraints, such as $f_{\text{Verma}}$ in the example above, which could not be expressed as conditional independences are called \emph{Verma constraints}. 
On non-parametric models, the graph decomposition result \cite{tian2002general} leading to Tian's algorithm \cite{richardson2017nested, tian2002testable} provides a method to find constraints in non-parametric graphical models. For instance, the constraint in Example \ref{ex: Verma} can be seen as the independence between $X_1$ and $X_4$ after fixing $X_2$ and $X_3$ (i.e. after removing all edges pointing into vertices 2 and 3). However, though non-parametrically complete, Tian's algorithm will fail to find the following constraint on the `gadget' graph in Figure \ref{fig: gadget}:

$$\sigma_{11}\sigma_{22}\sigma_{34}-\sigma_{13}\sigma_{14}\sigma_{22}+\sigma_{13}\sigma_{12}\sigma_{24}-\sigma_{23}\sigma_{11}\sigma_{24}=0.$$

Given any graph $G$, finding the vanishing ideal for $\M(G)$ remains an open problem despite various attempts, such as that of Drton et al. \cite{drton2018nested}. 

%We denote the ideal generated by all such polynomials by
%$$\I_{\R}(G):=\{f\in\R[\Sigma]:f(\Sigma)=0,\ \forall \Sigma\in\mathcal{M}(G)\}.$$
%
%Given any graph $G$, finding a set of polynomials that generate $\I_{\R}(G)$ is still an open problem.
%Recent attempts by Drton et al. \cite{drton2018nested} on restricted set separations only manages to provide sufficient conditions for a polynomial to be a member of $\I_{\R}(G)$. In particular, there are some constraints (e.g. \cite[Examples 6.1, 6.2]{drton2018nested}) which fail to satisfy the conditions of their theorem. 

In this paper, we are interested in finding the \emph{model ideal} $J(\M(G))$, such that all the equality constraints in the semialgebraic set $\M(G)\cap PD_V$ are generated by $J(\M(G))$. Note that the model ideal is different from the vanishing ideal $\I(G):=I(\M(G))$ considered in previous works since $I(V(J))\neq J$ in general. Since we are not working in an algebraically closed field, we cannot simply use the Nullstellensatz to recover the vanishing ideal.
In particular, we will find the set of polynomials that generate $J(\M(G))$ for any HTC-identifiable graph $G$ and prove that generating set is minimal for some subset of HTC-identifiable graphs.

%For ease of notation, we shall denote $\I_{PD_V}(G)$ simply as $\I(G)$ throughout this paper.

\section{Linear Identifiability}

In this section, we will first define linear identifiability and then explore the properties of linearly identifiable graphs. In particular, we will show that linearly identifiable graphs are equivalent to HTC-identifiable graphs.

First, notice that equation (\ref{eqn: main}) can be rearranged into
\begin{align}
\label{eqn: rearranged}
(I-\Lambda)^{T}\Sigma(I-\Lambda)=\Omega.
\end{align}

By equating each entry in $(I-\Lambda)^{T}\Sigma(I-\Lambda)$ that corresponds to a zero entry in $\Omega$ (i.e. a missing bidirected edge), we obtain a system of equations in terms of the unknowns $\lambda_{ij}$ only.
Since $\Sigma$ is a covariance matrix, it is symmetric. As any matrix congruent to a symmetric matrix is also symmetric, $(I-\Lambda)^{T}\Sigma(I-\Lambda)$ is also a symmetric matrix. By assumption, $\Omega$ is a positive definite covariance matrix, and is hence symmetric with strictly positive diagonal entries. Therefore, it suffices to equate only the strictly upper or strictly lower triangular entries of both matrices.

Define the matrices $A=(a_{ij})=(I-\Lambda)^{T}\Sigma$ and $B=(b_{ij})=(I-\Lambda)^{T}\Sigma(I-\Lambda)$, where each entry is a polynomial in which the indeterminates are entries in both $\Lambda$ and $\Sigma$. Evaluating the matrix multiplication, we obtain
\begin{align}
a_{ij}&=\sigma_{ij}-\sum\limits_{\ell\in\pa(i)}\lambda_{\ell i}\sigma_{\ell j},\\
b_{ij}&=a_{ij}-\sum\limits_{\ell\in\pa(j)}\lambda_{\ell j}a_{i\ell}.\label{eqn: b_ij}
\end{align}

Observe that $a_{ij}\in\mathbb{R}[\Sigma,\Lambda]$ is the polynomial obtained by summing all the covariances for half-treks from $i$ to $j$, and $b_{ij}\in\mathbb{R}[\Sigma,\Lambda]$ is the polynomial expression for the bidirected edge between $i$ and $j$.
Equating each entry of both matrices in (\ref{eqn: rearranged}), we obtain $b_{ij}=\omega_{ij}$. Therefore, for each missing bidirected edge between $i$ and $j$, we have the equation $b_{ij}=b_{ji}=0$. We immediately have a necessary condition for generic identifiability.

\begin{prop}
If $G$ has more than ${|V|\choose 2}$ edges, then $\M(G)$ is not generically identifiable.
\end{prop}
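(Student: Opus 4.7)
The plan is a dimension count. The parameter space $\Theta = \R^\D_{\text{reg}} \times PD(\B)$ is an open subset (in the Euclidean topology) of a real vector space of dimension $|\D| + |\B| + |V|$: one coordinate $\lambda_{ij}$ per directed edge, one coordinate $\omega_{ij}$ per bidirected edge, and $|V|$ diagonal entries of $\Omega$. The map $\phi$ sends $\Theta$ into the space of symmetric $|V|\times|V|$ matrices, which has dimension $\binom{|V|+1}{2} = |V| + \binom{|V|}{2}$. Under the hypothesis $|\D|+|\B| > \binom{|V|}{2}$, we therefore have
\[
\dim \Theta \;=\; |V| + |\D| + |\B| \;>\; |V| + \binom{|V|}{2} \;=\; \dim \mathrm{Sym}_{|V|}(\R).
\]

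Next I would invoke the standard consequence of the fiber dimension theorem for rational maps: since $\phi$ is rational (see equation~(\ref{eqn: main})) and defined on an open subset of $\R^{|\D|+|\B|+|V|}$, its generic fiber has dimension at least $\dim\Theta - \dim \overline{\mathrm{im}\,\phi} \geq \dim\Theta - \dim\mathrm{Sym}_{|V|}(\R) \geq 1$. Equivalently, at a generic $\theta\in\Theta$ the Jacobian of $\phi$ has rank at most $\dim \mathrm{Sym}_{|V|}(\R) < \dim\Theta$, so by the constant rank theorem applied on a Zariski-open subset, $\phi^{-1}(\phi(\theta))$ contains a smooth submanifold through $\theta$ of positive dimension. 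In particular $\phi^{-1}(\phi(\theta)) \neq \{\theta\}$ on a set of positive Lebesgue measure, so $\M(G)$ is not generically identifiable.

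The only real content beyond the counting is the invocation of the fiber-dimension argument for a rational map on a semialgebraic open set; the main (mild) obstacle is choosing a clean statement to cite. I would either (i) point to the algebraic-geometry statement that fibers of a dominant morphism of varieties have dimension $\dim X - \dim Y$ generically, applied to the Zariski closure of $\phi(\Theta)$, or (ii) give a direct smooth-manifolds argument using the constant rank theorem on the Zariski-open locus where the Jacobian has maximal rank. Either way, the conclusion follows because the edge count forces $\dim\Theta$ to strictly exceed the ambient dimension of the target, so $\phi$ cannot be injective on any open subset of $\Theta$.
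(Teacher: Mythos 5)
Your proof is correct, and it rests on the same underlying idea as the paper's --- a parameter count --- but carries it out at a different level. The paper first eliminates $\Omega$ (which is determined by $\Lambda$ and $\Sigma$ through $(I-\Lambda)^T\Sigma(I-\Lambda)=\Omega$) and counts the resulting polynomial system: $\binom{|V|}{2}-|\B|$ equations $b_{ij}=0$, one per missing bidirected edge, against $|\D|$ unknowns $\lambda_{ij}$; the hypothesis makes the unknowns outnumber the equations, and the paper stops there. You instead compare $\dim\Theta=|V|+|\D|+|\B|$ with $\dim\mathrm{Sym}_{|V|}(\R)=|V|+\binom{|V|}{2}$ for the full map $\phi$ and invoke a fiber-dimension or constant-rank argument; the two inequalities are identical after adding $|V|+|\B|$ to both sides of the paper's. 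What your route buys is rigor at the final step: over $\R$, ``more unknowns than equations'' does not by itself force a positive-dimensional solution set (consider $x^2+y^2=0$), so the paper's concluding sentence implicitly needs exactly the Jacobian-rank argument you supply. Of your two suggested justifications, the constant-rank theorem on the open dense locus of locally constant Jacobian rank is the cleaner one to cite, since the generic-fiber theorem must be stated in its semialgebraic form to apply to $\Theta$; note also that it suffices for the failure set to contain a nonempty open set, as any such set has positive Lebesgue measure.
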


\begin{proof}
Each missing bidirected edge corresponds to an equation $b_{ij}=0$ while each directed edge corresponds to an unknown $\lambda_{ij}$. If we have more than $|V|\choose 2$ edges, then we have more unknowns than equations. Hence, $\M(G)$ is not identifiable.
\end{proof}

We shall now provide the definition for a graphical model to be linearly identifiable.

\begin{dfn}
Let $G$ be a mixed graph. We say that $\M(G)$ is \emph{linearly identifiable} if for each vertex $v$, we can find a system of $m=|\pa(v)|$ linearly independent equations for the indeterminates $\Lambda_{\pa(v),v}:=\{\lambda_{iv}\mid i\in\pa(v)\}$. Furthermore, each equation is of the form $b_{jv}=0$ (defined in (\ref{eqn: b_ij})) and is expressed as a polynomial (e.g. through substitution) in terms of $\Lambda_{\pa(v),v}$ with coefficients that are functions of $\Sigma$.
\end{dfn}

If a model is linearly identifiable, then we solve for $\Lambda$ (as polynomials in $\Sigma$) recursively using linear algebra. We can then recover $\Omega$ using equation (\ref{eqn: rearranged}). Hence, all linearly identifiable graphs are generically identifiable. We shall demonstrate linear identifiability with a few examples.

\begin{figure}
\centering
\begin{subfigure}[b]{0.3\textwidth}
\centering
  \begin{tikzpicture}
  [rv/.style={circle, draw, thick, minimum size=6mm, inner sep=0.5mm}, node distance=15mm, >=stealth]
  \pgfsetarrows{latex-latex};
  \begin{scope}
  \node[rv]  (1)              {1};
  \node[rv, right of=1, yshift=0mm, xshift=0mm] (2) {2};
  \node[rv, right of=2, yshift=0mm, xshift=0mm] (3) {3};
  \draw[->, very thick, color=blue] (1) -- (2);
  \draw[->, very thick, color=blue] (2) -- (3);
  \draw[->, very thick, color=blue] (1) to[bend left] (3);
  \end{scope}
    \end{tikzpicture}
    \caption{A linearly identifiable graph.}
    \label{graph: identifiable}
\end{subfigure}
\hfill
\begin{subfigure}[b]{0.3\textwidth}
\centering
  \begin{tikzpicture}
  [rv/.style={circle, draw, thick, minimum size=6mm, inner sep=0.5mm}, node distance=15mm, >=stealth]
  \pgfsetarrows{latex-latex};
  \begin{scope}
  \node[rv]  (1)              {1};
  \node[rv, below of=1, yshift=0mm, xshift=0mm] (3) {3};
  \node[rv, left of=3, yshift=0mm, xshift=0mm] (2) {2};
  \node[rv, right of=3, yshift=0mm, xshift=0mm] (4) {4};
  \draw[->, very thick, color=blue] (1) -- (2);
  \draw[->, very thick, color=blue] (1) -- (3);
  \draw[->, very thick, color=blue] (1) -- (4);
  \draw[<->, very thick, color=red] (1) to[bend right] (2);
  \draw[<->, very thick, color=red] (1) to[bend right] (3);
  \draw[<->, very thick, color=red] (1) to[bend left] (4);
  \end{scope}
    \end{tikzpicture}
    \caption{A 2-identifiable graph.}
    \label{graph: not solvable}
\end{subfigure}
\hfill
\begin{subfigure}[b]{0.3\textwidth}
\centering
  \begin{tikzpicture}
  [rv/.style={circle, draw, thick, minimum size=6mm, inner sep=0.5mm}, node distance=15mm, >=stealth]
  \pgfsetarrows{latex-latex};
  \begin{scope}
  \node[rv]  (1)              {1};
  \node[rv, right of=1, yshift=0mm, xshift=0mm] (2) {2};
  \node[rv, right of=2, yshift=0mm, xshift=0mm] (3) {3};
  \draw[->, very thick, color=blue] (1) -- (2);
  \draw[->, very thick, color=blue] (2) -- (3);
  \draw[<->, very thick, color=red] (1) to[bend left] (2);
  \end{scope}
    \end{tikzpicture}
    \caption{A graph that is not identifiable.}
    \label{graph: solvable but not identifiable}
\end{subfigure}
    \caption{}
    \label{fig:three graphs}
\end{figure}

\begin{ex}[A linearly identifiable graph]
Consider the saturated graph $G$ with three vertices in Figure \ref{fig:three graphs}(a). In this graph, the missing bidirected edges give us the equations 
\begin{align*}
    b_{21}&=\sigma_{12}-\lambda_{12}\sigma_{11}=0,\\
    b_{31}&=\sigma_{13}-\lambda_{13}\sigma_{11}-\lambda_{23}\sigma_{12}=0,\\
    b_{32}&=\sigma_{23}-\lambda_{13}\sigma_{12}-\lambda_{23}\sigma_{22}-\lambda_{12}(\sigma_{13}-\lambda_{13}\sigma_{11}-\lambda_{23}\sigma_{12})=0.
\end{align*}
The vertex 2 has one parent and we have that the first equation is linear in $\lambda_{12}$. Solving this, we obtain $\lambda_{12}=\sigma_{12}/\sigma_{11}.$ The vertex 3 has two parents. Substituting $\lambda_{12}$ into the last two equations, we have a system of two linearly independent equations with unknowns $\lambda_{13},\lambda_{23}$ and coefficients in $\Sigma$. Hence, $\M(G)$ is linearly solvable.
\end{ex}

However, not all graphs have such a system of equations, which we shall see in next example.

\begin{ex}[A graph that is not linearly identifiable]
\label{ex: not solvable}
Consider the graph $G$ in Figure \ref{fig:three graphs}(b) with three bows. The missing bidirected edges give us the equations
\begin{align*}
    b_{32}&=\sigma_{23}-\lambda_{13}\sigma_{12}-\lambda_{12}(\sigma_{13}-\lambda_{13}\sigma_{11})=0,\\
    b_{42}&=\sigma_{24}-\lambda_{14}\sigma_{12}-\lambda_{12}(\sigma_{14}-\lambda_{14}\sigma_{11})=0,\\
    b_{43}&=\sigma_{34}-\lambda_{14}\sigma_{13}-\lambda_{13}(\sigma_{14}-\lambda_{14}\sigma_{11})=0.
\end{align*}
$\M(G)$ is not linearly identifiable as we cannot express any subset of $\lambda_{12},\lambda_{13}$ or $\lambda_{14}$ linearly in terms of $\Sigma$. In fact, from the first two equations, we have
\begin{align*}
    \lambda_{13}=\frac{\sigma_{23}-\lambda_{12}\sigma_{13}}{\sigma_{12}-\lambda_{12}\sigma_{11}}\quad\text{and}\quad \lambda_{14}=\frac{\sigma_{24}-\lambda_{12}\sigma_{14}}{\sigma_{12}-\lambda_{12}\sigma_{11}}.
\end{align*}
Substituting these into the third equation, we obtain the quadratic equation $a\lambda_{12}^2+b\lambda_{12}+c=0$ with
\begin{align*}
a&=\sigma_{11}^2\sigma_{34}-\sigma_{11}\sigma_{13}\sigma_{14},\\
b&=2\sigma_{12}\sigma_{13}\sigma_{14}-2\sigma_{11}\sigma_{12}\sigma_{34},\\
c&=\sigma_{12}^2\sigma_{34}-\sigma_{12}\sigma_{13}\sigma_{24}-\sigma_{12}\sigma_{14}\sigma_{23}+\sigma_{11}\sigma_{23}\sigma_{24}.
\end{align*}
Since $b^2-4ac\neq 0$, if $\Sigma\in\mathbb{C}^{|V|\times |V|}$ then $\M(G)$ is not generically identifiable. In fact, if $\Sigma\in\mathbb{C}^{|V|\times |V|}$ and we have to solve a $k$th order equations without repeated roots, then $\M(G)$ is $k$-identifiable.
\end{ex}

However, determining whether a graph is linearly identifiable is not straightforward. For example, there are instances where the coefficient of one of the unknowns, $\lambda_{ij}$, is zero.

\begin{ex}
\label{ex: solvable but not identifiable}
Consider the graph in Figure \ref{fig:three graphs}(c). The missing bidirected edges corresponds to the equations
\begin{align}
    b_{31}&=\sigma_{13}-\lambda_{23}\sigma_{12}=0 \label{eqn: ex-solvable1},\\
    b_{32}&=\sigma_{23}-\lambda_{23}\sigma_{22}-\lambda_{12}(\sigma_{13}-\lambda_{23}\sigma_{12})=0. \label{eqn: ex-solvable2}
\end{align}
At first glance, these equations might seem to suggest that $\M(G)$ is linearly identifiable as we can use (\ref{eqn: ex-solvable1}) to solve for $\lambda_{23}$ and then substitute that solution into (\ref{eqn: ex-solvable2}) to solve for $\lambda_{12}$. However, the coefficient in front of $\lambda_{12}$ is precisely $b_{31}=0$. Hence, we cannot find an equation that is linear in $\lambda_{12}$, so $\M(G)$ is not linearly identifiable.
\end{ex}

As linearly identifiable graphs are not straightforward to classify, we will first introduce a weaker definition that specifically excludes the case where the coefficients of the unknowns might be zero.

\begin{dfn}
Let $G$ be a mixed graph. We say that a vertex $v$ satisfies the \emph{quasi-linear condition} if we can find a system of $m=|\pa(v)|$ linearly independent equations of the form $b_{ij}=0$ for the indeterminates $\Lambda_{\pa(v),v}:=\{\lambda_{iv}\mid i\in\pa(v)\}$. Furthermore, each equation is expressed as a polynomial (e.g. through substitution) in terms of $\Lambda_{\pa(v),v}$ with coefficients that are functions of $\Sigma$ and $\Lambda_{\pa(k),k}$ where the vertex $k$ is a prior vertex that also satisfies the quasi-linear condition.
We say that $\M(G)$ is \emph{quasi-linearly identifiable} if we could recursively define every vertex $v$ to satisfy the quasi-linear condition.
\end{dfn}

The main difference between quasi-linearly identifiable models and linearly identifiable models is in the former, we do not worry about the specific value of each $\lambda_{ij}$ at each iteration even if it might cause the coefficients of future unknowns to be zero after substitution.

\begin{ex}
In Example \ref{ex: solvable but not identifiable}, vertex 1 satisfies the quasi-linear condition as $|\pa(1)|=0$, and a system of zero equations is trivial. Furthermore, the vertex 3 satisfies the quasi-linear condition as (\ref{eqn: ex-solvable1}) is a system of $|\pa(3)|=1$ equation with indeterminates in $\Sigma$ that is linear in $\{\lambda_{i3}\mid i\in\pa(3)\}$. Finally, the vertex 2 also satisfies the quasi-linear condition as (\ref{eqn: ex-solvable2}) is a system of $|\pa(2)|=1$ equation with indeterminates in $\Sigma$ and $\lambda_{j3}$, where the vertex 3 also satisfies the quasi-linear condition, that is linear in $\{\lambda_{i2}\mid i\in\pa(2)\}$. Hence, the graph in Example \ref{ex: solvable but not identifiable} is a quasi-linearly identifiable. 
\end{ex}

If a quasi-linearly identifiable model is also linearly identifiable, we could solve for $\lambda_{jk}$ symbolically for all $j\in\pa(k)$ and substitute this back into the equations $b_{\ell v}=0$ to obtain $|\pa(v)|$ linearly independent equations in $\lambda_{iv}$.

% \begin{dfn}
% Let $G$ be an ADMG. We say that $\M(G)$ is \emph{quasi-linearly identifiable} if there exists a topological ordering such that for each vertex $v$, we can find a system of $m$ equations, with indeterminates in $\Sigma$, that is linear in any subset of $\{\lambda_{iv}\mid i<v\}$, where $m=|\pa(v)|$. 
% \end{dfn}

\subsection{Properties of Quasi-linearly Identifiable Graphs}

%Note that in order to get $m=|\pa(v)|$ equations for each vertex $v$, we require at least as many missing bidirected edges incident to $v$ as there are parents of $v$. Furthermore, we have
Consider the equation
\begin{align}
b_{ij}&=a_{ij}-\sum\limits_{k\in\pa(j)}\lambda_{k j}a_{ik}\nonumber\\
&=\sigma_{ij}-\sum\limits_{\ell\in\pa(i)}\lambda_{\ell i}\sigma_{\ell j}-\sum\limits_{k\in\pa(j)}\lambda_{k j}a_{ij}.\label{eqn: b expansion}
\end{align}
Suppose that $v$ is the first vertex defined to satisfy the quasi-linear condition. Then, we want to have a set of $m=|\pa(v)|$ linear equations of the form
\begin{align*}
%\label{eqn: b-want}
    \sigma_{ij}-\sum\limits_{p\in\pa(v)}\lambda_{pv}\sigma_{hv}=0
\end{align*} 
for some values of $i,j,h$, so we can solve for $\lambda_{pv}$ for all $p\in\pa(v)$. 

We see that this can be achieved by picking $m$ equations of the form $b_{vj}=0$ (i.e. no bidirected edge between $v$ and $j$) such that the last summation in (\ref{eqn: b expansion}) is zero. This can be achieved if for all $k\in\pa(j)$, we have $a_{vk}=0$ (i.e. no half-trek from $v$ to $k$).

Writing this in terms of graph properties, we want to find a vertex $v$ and a set of vertices $S$ such that:
\begin{enumerate}
    \item $|S|=|\pa(v)|$;
    \item $S\cap(\sib(v)\cup\{v\})=\emptyset$; and
    \item for each vertex $j\in S$, for all $k\in\pa(j)$, there are no half-treks from $v$ to $k$.
\end{enumerate}

We can now define the quasi-linear properties above for all vertices recursively using the following algorithm.

\begin{algo}
\label{algo: quasi-linear}
\end{algo}
\vspace{-2.5mm}
\begin{algorithm}[H]
 \KwIn{Mixed graph $G$.}
 \KwOut{Set of recursively quasi-linear vertices}
 {\bf Initialise: }$Q=\emptyset$\;
  \While{$\exists v\in V\backslash Q$ such that we can find a set of vertices $S_v$ with properties
  \begin{enumerate}
    \item $|S_v|=|\pa(v)|$,
    \item $S_v\cap(\sib(v)\cup\{v\})=\emptyset$ and
    \item For each vertex $j\in S_v$, either $j\in Q$, or for all $k\in\pa(j)$, there are no half-treks from $v$ to $k$.
\end{enumerate}}{$Q=Q\cup\{v\}$}
  \Return $Q$.
\end{algorithm}

\begin{dfn}
\label{dfn: quasi-linear}
Suppose $G$ is a mixed graph. We say that a vertex $v$ is \emph{recursively quasi-linear in $G$} if it is contained in the output of Algorithm \ref{algo: quasi-linear}.
\end{dfn}

\begin{lem}
\label{lem: redfn}
Suppose the first two conditions of Algorithm \ref{algo: quasi-linear} are satisfied. Then, there are no half-treks from $v$ to $k$ for all $k\in\pa(j)$ if and only if there are no half-treks from $v$ to $j$.
\end{lem}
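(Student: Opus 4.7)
The plan is to prove the equivalence by direct walk-extension and walk-truncation arguments. The key structural observation is that a half-trek from $v$ to $j$ has one of two forms: either a directed walk $v \to v_1 \to \cdots \to v_r = j$, or a walk starting with a bidirected edge $v \leftrightarrow w_0 \to w_1 \to \cdots \to w_r = j$. The degenerate cases $r = 0$ correspond respectively to the trivial walk $v = j$ and the single bidirected edge $v \leftrightarrow j$, and both are excluded by condition 2 of Algorithm \ref{algo: quasi-linear} (which asserts $j \notin \sib(v) \cup \{v\}$). So under condition 2, every half-trek from $v$ to $j$ necessarily ends with a directed edge $k \to j$ for some $k \in \pa(j)$.

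For the $(\Leftarrow)$ direction, I would suppose there exists $k \in \pa(j)$ admitting a half-trek $\pi$ from $v$ to $k$, and append the directed edge $k \to j$ to $\pi$. No collider is created at $k$, since the appended edge $k \to j$ has no arrowhead at $k$, and the other vertices retain their local edge configurations. The left side $\pi^L$ is unchanged and still of length $0$, so the extended walk is a half-trek from $v$ to $j$.

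For the $(\Rightarrow)$ direction, I would suppose for contradiction that a half-trek $\pi$ from $v$ to $j$ exists. By the structural observation, $\pi$ must terminate with an edge $k \to j$ for some $k \in \pa(j)$. Truncating $\pi$ by dropping $j$ and this last edge leaves a sub-walk from $v$ to $k$ that inherits both the no-collider property and the empty-left-side property from $\pi$, and hence is itself a half-trek from $v$ to $k$, contradicting the hypothesis.

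The only subtle point is the role of condition 2: without it, the trivial half-trek $v = j$ or the single-edge half-trek $v \leftrightarrow j$ could exist without passing through any parent of $j$, and the truncation step in the $(\Rightarrow)$ direction would fail. Condition 2 is precisely what rules these out, after which the equivalence follows cleanly; note that condition 1 is not needed for this argument.
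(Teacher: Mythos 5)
Your proof is correct and follows essentially the same route as the paper: append the edge $k\to j$ to a half-trek from $v$ to $k$ for one direction, and truncate the final edge of a half-trek from $v$ to $j$ for the other, using condition~2 of Algorithm~\ref{algo: quasi-linear} to rule out the degenerate half-treks $v=j$ and $v\leftrightarrow j$. Your write-up is somewhat more explicit than the paper's (which compresses the truncation argument into the single remark that $v\notin\sib(j)$), but there is no substantive difference.
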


\begin{proof}
($\Leftarrow$) Suppose we have no half-treks from $v$ to $j$. Since $k$ is a parent of $j$, we will not have a half-trek from $v$ to $k$ since otherwise we can append $k\to j$ at the end of this half-trek to form a half-trek from $v$ to $j$. %Similarly, if there is a half-trek from $v$ to $k$, then $k$ cannot be a parent of $j$ otherwise it will violate our assumption that there are no half-treks from $v$ to $j$. %Note that $k$ can only be a parent of $j$ if $k<j$. 
%We've showed that the statement in our Lemma implies the third condition in the definition.

($\Rightarrow$) Suppose that there are no half-treks between $v$ and the parents of $j$. By condition 2 of Algorithm \ref{algo: quasi-linear}, $v$ is not a sibling of $j$. Hence, there are no half-treks from $v$ to $j$. 
\end{proof}

\begin{thm}
\label{thm: linear-implies-solvable}
Let $G$ be a mixed graph. Then $\M(G)$ is quasi-linearly identifiable if and only if we have recursively defined that all vertices are recursively quasi-linear.
\end{thm}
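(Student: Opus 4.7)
The plan is to prove both directions of the iff by induction on vertex orderings, carefully tracking which $\lambda$'s are ``known'' at each stage.

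For the $\Leftarrow$ direction, I would fix the order $v_1, v_2, \ldots$ in which Algorithm \ref{algo: quasi-linear} adds vertices to $Q$, with witnessing sets $S_{v_1}, S_{v_2}, \ldots$, and induct on $i$. Assuming each $v_s$ with $s < i$ already satisfies the quasi-linear condition, I propose $\{b_{vj} = 0 : j \in S_v\}$ as the witnessing system at $v = v_i$. Writing $b_{vj} = a_{vj} - \sum_{k \in \pa(j)} \lambda_{kj} a_{vk}$ and splitting by condition 3 of the algorithm: if $j \in Q_{i-1} := \{v_1, \ldots, v_{i-1}\}$, substituting the prior expressions for each $\lambda_{kj}$ leaves an equation linear in $\Lambda_{\pa(v),v}$ with coefficients in $\Sigma$ and prior $\Lambda$. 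If instead $j \notin Q_{i-1}$, then no half-trek from $v$ reaches any $k \in \pa(j)$, and the half-trek interpretation of $a_{vk}$ (which follows by sorting the trek-rule expansion of $\sigma_{vk}$ according to whether its left leg is trivial or starts $v \leftarrow \ell$) gives $a_{vk} = 0$ on $\mathcal{M}(G)$, so $b_{vj} = 0$ reduces to $a_{vj} = 0$, again linear in $\Lambda_{\pa(v),v}$ with coefficients in $\Sigma$. The final bookkeeping step is linear independence of the resulting $|\pa(v)|$ equations, which I would argue generically from the structure of the coefficient matrix indexed by $\pa(v)$ and $S_v$ (modified for columns with $j \in Q_{i-1}$).

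For the $\Rightarrow$ direction, I would assume $\mathcal{M}(G)$ is quasi-linearly identifiable with respect to some ordering $v_1, v_2, \ldots$ and induct on $i$ to show that the algorithm adds $v_i$ at step $i$. By hypothesis there are $m = |\pa(v)|$ linearly independent equations $b_{\alpha\beta} = 0$ that, after substituting priors for $v_1, \ldots, v_{i-1}$, become polynomials in $\Lambda_{\pa(v),v}$ with coefficients in $\Sigma$ and prior $\Lambda$. Because $\Lambda_{\pa(v),v}$ can appear in $b_{\alpha\beta}$ only when $v \in \{\alpha, \beta\}$, the symmetry $b_{\alpha\beta} = b_{\beta\alpha}$ lets me take each equation in the form $b_{v j_s} = 0$, and linear independence forces the $j_s$ distinct; set $S_v = \{j_1, \ldots, j_m\}$. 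Conditions 1 and 2 of the algorithm are immediate: $|S_v| = |\pa(v)|$ by construction, $v \notin S_v$ since $b_{vv} = \omega_{vv} > 0$, and $j_s \notin \sib(v)$ since $b_{v j_s} = 0$ corresponds to a missing bidirected edge. For condition 3, fix $j \in S_v$; the coefficient of each $\lambda_{kj}$ (for $k \in \pa(j)$) in $b_{vj}$ is $-a_{vk}$, and since $\lambda_{kj} \in \Lambda_{\pa(j), j}$ qualifies as a prior coefficient only when $j \in Q_{i-1}$, for $j \notin Q_{i-1}$ the quasi-linearity requirement forces $a_{vk}$ to vanish on $\mathcal{M}(G)$, which by the trek rule means there is no half-trek from $v$ to $k$.

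The main obstacle is formalising these reductions: the identity $a_{vk} = 0$ is valid only modulo the vanishing ideal of $\mathcal{M}(G)$, not in the full polynomial ring $\R[\Sigma, \Lambda]$, so the argument is really being carried out in the quotient, and one must articulate that the quasi-linear definition permits such model-level simplifications. A secondary technical point is the linear independence verification in the $\Leftarrow$ direction, which I would handle by a generic/dimension argument rather than by explicit determinant computation.
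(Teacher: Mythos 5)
Your proposal is correct and follows essentially the same route as the paper's proof: the forward direction expands $b_{vj}$ exactly as in (\ref{eqn: b expansion}) and uses condition 3 of Algorithm \ref{algo: quasi-linear} to either annihilate or linearize the trailing sum, and the key facts you invoke (the coefficient of $\lambda_{kj}$ in $b_{vj}$ is $-a_{vk}$, and $a_{vk}$ vanishes on $\M(G)$ iff there is no half-trek from $v$ to $k$) are the same ones the paper relies on. The only presentational difference is that you prove $(\Rightarrow)$ by a direct induction constructing $S_v$ from the witnessing equations, where the paper argues the contrapositive very tersely; your version is, if anything, more explicit about linear independence and about the fact that $a_{vk}=0$ is an identity on the model rather than in $\mathbb{R}[\Sigma,\Lambda]$.
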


\begin{proof}
($\Leftarrow$) It suffices to show that if $v$ is recursively quasi-linear, then we can find $|\pa(v)|$ equations that are linear in $\{\lambda_{iv}\mid i\in\pa(v)\}$. Suppose that for each vertex $v$ we can find such an $S_v$ satisfying the conditions in Algorithm \ref{algo: quasi-linear}. From condition 2, for each $j\in S_v$, we have an equation of the form $b_{vj}=0$. Condition 1 states that we can find $|\pa(v)|$ such equations. Expanding all such equations $b_{vj}=0$ into (\ref{eqn: b expansion}), if $j\in S$ is not recursively quasi-linear, condition 3 guarantees that the last summand in (\ref{eqn: b expansion}) is zero. If $j\in S$ is recursively quasi-linear, then by condition 3 the last summand in (\ref{eqn: b expansion}) is linear in $\lambda_{\ell i}$ with indeterminates in $\Sigma$ and $\lambda_{kj}$. 

($\Rightarrow$) Suppose that a vertex $v$ is not recursively quasi-linear. If either of the first two conditions in Algorithm \ref{algo: quasi-linear} fails, we no longer have $|\pa(v)|$ linearly independent equations in any subset of $\{\lambda_{iv}\mid i\in\pa(v)\}$. Hence, $\M(G)$ is not quasi-linearly identifiable.
Now, if the third condition fail, then by definition we have at least one other vertex that is not recursively quasi-linear. So we must have at least one other vertex that fails to satisfy the first two conditions in Algorithm \ref{algo: quasi-linear}.
\end{proof}

\begin{ex}
In Example \ref{ex: solvable but not identifiable}, vertex 1 is recursively quasi-linear trivially. Furthermore, we have $S_3=\{1\}$ so the vertex 3 is also recursively quasi-linear. Moreover, $S_2=\{3\}$, so vertex 2 is also recursively quasi-linear. Hence, the graph is quasi-linearly identifiable.
\end{ex}

\subsection{Properties of Linearly Identifiable Graphs}

Previously, we found the necessary and sufficient conditions for a graph to be quasi-linearly identifiable. The following Theorem will determine when a quasi-linearly identifiable graph is also linearly identifiable.

\begin{thm}
\label{thm: quasi-linear to linear}
Let $G$ be a mixed graph such that $\M(G)$ is quasi-linearly identifiable. Then $\M(G)$ is linearly identifiable if and only if for each vertex $v$, we have a set of vertices $S_v$ satisfying the half-trek criterion with respect to $v$ in addition to the three conditions in Algorithm \ref{algo: quasi-linear}.
\end{thm}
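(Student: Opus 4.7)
The plan is to identify the coefficient matrix of the linear system at each vertex $v$, recognise it as a minor of $(I-\Lambda)^{-T}\Omega$, and then invoke a Lindstr\"om--Gessel--Viennot type trek-determinant identity (the combinatorial core of Foygel et al.\ \cite{foygel2012half}) to equate its non-vanishing with the existence of a half-trek system without sided intersection. Concretely, expanding $b_{vj}$ via (\ref{eqn: b expansion}) and grouping terms by $\lambda_{\ell v}$ gives
\[
b_{vj}=\Bigl(\sigma_{vj}-\sum_{k\in\pa(j)}\lambda_{kj}\sigma_{vk}\Bigr)-\sum_{\ell\in\pa(v)}\lambda_{\ell v}\,c_{\ell j},\qquad c_{\ell j}:=\sigma_{\ell j}-\sum_{k\in\pa(j)}\lambda_{kj}\sigma_{\ell k},
\]
with $c_{\ell j}=[\Sigma(I-\Lambda)]_{\ell j}=[(I-\Lambda)^{-T}\Omega]_{\ell j}$. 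For each $j\in S_v$, turning $b_{vj}=0$ into a linear equation in $\Lambda_{\pa(v),v}$ with coefficients in $\R(\Sigma)$ requires each $\lambda_{kj}$ to be either substitutable (when $j\in Q$, using the previously solved rational expressions) or absent because $a_{vk}\equiv 0$ as a polynomial in $\Lambda,\Omega$; by Lemma~\ref{lem: redfn} together with condition 3 of Algorithm~\ref{algo: quasi-linear}, the latter is exactly the case $j\notin Q$. After substitution the system $\{b_{vj}=0:j\in S_v\}$ reads $C_v\,\Lambda_{\pa(v),v}=d_v$ with $C_v=-[(I-\Lambda)^{-T}\Omega]_{\pa(v),S_v}$.

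For $(\Leftarrow)$, given $S_v$ satisfying both HTC and the three conditions of Algorithm~\ref{algo: quasi-linear}, HTC condition 3 furnishes a system of half-treks from $S_v$ to $\pa(v)$ with no sided intersection. By the LGV-type identity such a system contributes a non-cancelling monomial to $\det C_v$, so $\det C_v\not\equiv 0$ as a polynomial in $\Lambda,\Omega$. The linear system is then uniquely solvable, and induction along the Algorithm~\ref{algo: quasi-linear} ordering yields each $\lambda_{\ell v}$ as a rational function of $\Sigma$, so $\M(G)$ is linearly identifiable.

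For $(\Rightarrow)$, linear identifiability produces for each $v$ a set $S_v$ and $|\pa(v)|$ linearly independent equations $b_{vj}=0$ with coefficients in $\R(\Sigma)$. The requirement that every $\lambda_{kj}$ be eliminable forces, for each $j\in S_v$, either $j\in Q$ or $a_{vk}\equiv 0$ for all $k\in\pa(j)$; by Lemma~\ref{lem: redfn} the latter says that $v$ has no half-trek to $j$, which is condition 3 of Algorithm~\ref{algo: quasi-linear} (conditions 1 and 2 are immediate from the form of the equations). Linear independence means $\det C_v\not\equiv 0$ as a polynomial in $\Lambda,\Omega$, and running the trek-determinant identity in reverse extracts a system of half-treks from $S_v$ to $\pa(v)$ without sided intersection, i.e.\ HTC condition 3. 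The hardest step throughout is the rigorous use of the LGV-type identity in the (possibly cyclic) mixed-graph setting and the check that the recursive substitutions do not alter $\det C_v$ viewed as a polynomial in $\Lambda,\Omega$: this is precisely where Foygel et al.'s half-trek machinery is invoked and re-read here as a characterisation of \emph{linear}, rather than merely generic, identifiability.
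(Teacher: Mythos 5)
Your proposal is correct and follows essentially the same route as the paper: both identify the coefficient matrix of the linear system at $v$ as the submatrix $A_{S_v,\pa(v)}$ of $(I-\Lambda)^T\Sigma=\Omega(I-\Lambda)^{-1}$ (your $C_v$ up to sign and transpose) and then invoke the trek-determinant result of Foygel et al.\ (their Lemma 3) to equate generic full rank with the existence of a system of half-treks from $S_v$ to $\pa(v)$ with no sided intersection. Your write-up is somewhat more explicit than the paper's about the substitution step and the LGV mechanism, but the argument is the same.
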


\begin{proof}
Since $\M(G)$ is quasi-linearly identifiable, we have a system of $|\pa(v)|$ linear equations in $\{\lambda_{iv}\mid i\in\pa(v)\}$ for each vertex $v$. We can rewrite the equations $b_{sv}=a_{sv}-\sum\limits_{\ell\in\pa(v)}\lambda_{\ell v}a_{s\ell}=0$ for some choice of $s\in S_v$ into the following matrix equation
\begin{align*}
\left[\begin{array}{ccc}
    a_{s_1p_1} & \dots & a_{s_1p_m} \\
    \vdots &  & \vdots \\
    a_{s_mp_1} & \dots & a_{s_mp_m}
\end{array}\right]\left[\begin{array}{c}
     \lambda_{p_1v} \\
     \vdots \\
     \lambda_{p_mv}
\end{array}\right]=\left[\begin{array}{c}
         a_{s_1v} \\
          \vdots \\
          a_{s_mv}
\end{array}\right]
\end{align*}
where $S_v=\{s_1,\dots,s_m\}$ and $\pa(v)=\{p_1,\dots,p_m\}$. Note that the leftmost matrix is precisely $A_{S,\pa(v)}$. Hence the system of equations are linearly independent iff $A_{S_v,\pa(v)}$ is of full rank.

Now, if $A_{S_v,\pa(v)}$ has full rank, by Lemma 3 of Foygel et al. \cite{foygel2012half}, $S_v$ satisfies the half-trek criterion. On the other hand, if $S_v$ satisfies the half-trek criterion, then we have a system of half-treks with no sided intersection from $S_v$ to $\pa(v)$. Since each entry of $A_{S_v,\pa(v)}$ is a polynomial obtained from summing all the covariances for half-trek between $s_i\in S_v$ and $p_j\in\pa(v)$, $A_{S_v,\pa(v)}$ has full rank.
\end{proof}

\begin{ex}
In Example \ref{ex: solvable but not identifiable}, we have that $\M(G)$ is quasi-linearly identifiable. However, $\M(G)$ is not linearly identifiable since $S_2=\{3\}$ but there are no half-treks from 3 to 2, therefore $S_2$ does not satisfy the half-trek criterion with respect to 2.
\end{ex}

We now have the following necessary and sufficient conditions for linear identifiability.

\begin{dfn}
\label{dfn: linear-identifiability-criterion}
We say that a vertex $v$ satisfies the \emph{linear identifiability criterion} if there exists a set of vertices $S_v$ such that:
\begin{itemize}
    \item For each vertex $v\in V$, $S_v$ satisfies the half-trek criterion with respect to $v$;
    \item for each vertex $j\in S_v$, either $j$ satisfies the linear identifiability criterion or there are no half-treks from $v$ to $j$.
\end{itemize}
\end{dfn}

\begin{prop}
Let $G$ be a mixed graph. Then $\M(G)$ is linearly identifiable if and only if all we can recursively check that all vertices in $G$ satisfy the linear identifiability criterion.
\end{prop}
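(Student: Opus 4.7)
The plan is to reduce the proposition to the combination of Theorem \ref{thm: linear-implies-solvable} and Theorem \ref{thm: quasi-linear to linear}, with Lemma \ref{lem: redfn} serving as the bridge between the two different phrasings of condition 3 (that is, ``no half-treks from $v$ to any parent $k$ of $j$'' versus ``no half-treks from $v$ to $j$'').

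For the direction ($\Leftarrow$), I would assume every vertex $v$ satisfies the linear identifiability criterion and show, by induction on the recursive depth of that definition, that every vertex is recursively quasi-linear in the sense of Definition \ref{dfn: quasi-linear}. Given a witness $S_v$ for the linear identifiability criterion at $v$, the half-trek criterion provides $|S_v|=|\pa(v)|$ and $S_v\cap(\{v\}\cup\sib(v))=\emptyset$, which are exactly conditions 1 and 2 of Algorithm \ref{algo: quasi-linear}. For condition 3, the clause ``either $j$ satisfies the linear identifiability criterion or no half-trek from $v$ to $j$'' matches the algorithm's clause ``either $j\in Q$ or no half-trek from $v$ to any $k\in\pa(j)$'' once we invoke the inductive hypothesis for the first alternative and Lemma \ref{lem: redfn} for the second. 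Thus $\M(G)$ is quasi-linearly identifiable by Theorem \ref{thm: linear-implies-solvable}, and since each chosen $S_v$ also satisfies the half-trek criterion, Theorem \ref{thm: quasi-linear to linear} upgrades this to linear identifiability.

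For the direction ($\Rightarrow$), I would assume $\M(G)$ is linearly identifiable, and hence quasi-linearly identifiable. By Theorem \ref{thm: linear-implies-solvable} every vertex is recursively quasi-linear, and by Theorem \ref{thm: quasi-linear to linear} for each $v$ there exists $S_v$ satisfying the half-trek criterion with respect to $v$ and also the three conditions of Algorithm \ref{algo: quasi-linear}. I would then show, again by induction on the order in which vertices enter $Q$ during Algorithm \ref{algo: quasi-linear}, that each vertex satisfies the linear identifiability criterion: the first condition of Definition \ref{dfn: linear-identifiability-criterion} is immediate from the HTC part, while the second condition follows because any $j\in S_v$ that lies in $Q$ satisfies the linear identifiability criterion by the inductive hypothesis, and any $j\in S_v$ that does not lie in $Q$ has no half-trek from $v$ to $j$ by Lemma \ref{lem: redfn} applied to condition 3.

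The main subtlety, rather than a genuine obstacle, is making sure the inductive order is coherent: the recursion in Definition \ref{dfn: linear-identifiability-criterion} is not stratified by any explicit ordering, so I have to use the ordering in which vertices are added to $Q$ by Algorithm \ref{algo: quasi-linear} to drive the induction in both directions, and check that the same $S_v$ can be used as a witness for both the algorithm and the linear identifiability criterion. Once that ordering is pinned down, the proof is essentially a bookkeeping exercise applying the two previous theorems and Lemma \ref{lem: redfn}.
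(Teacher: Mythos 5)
Your proposal is correct and follows essentially the same route as the paper: both reduce the statement to the combination of Theorem \ref{thm: linear-implies-solvable} and Theorem \ref{thm: quasi-linear to linear}, using Lemma \ref{lem: redfn} to translate the algorithm's condition on parents of $j$ into the ``no half-trek from $v$ to $j$'' clause of Definition \ref{dfn: linear-identifiability-criterion}. The paper's own proof is terser and leaves the inductive bookkeeping implicit, whereas you spell out the induction over the order in which vertices enter $Q$; this is a presentational difference, not a different argument.
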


\begin{proof}
From Lemma \ref{lem: redfn} and Theorem \ref{thm: linear-implies-solvable}, a graphical model is quasi-linearly identifiable if and only if for each vertex $v\in V$, we can find a set of vertices $S_v$ such that 
\begin{itemize}
    \item $|S_v|=|\pa(v)|$,
    \item $S_v\cap(\sib(v)\cup\{v\})=\emptyset$ and
    \item for each vertex $j\in S_v$, either $j$ is recursively quasi-linear or there are no half-treks from $v$ to $j$.
\end{itemize} 
From Theorem \ref{thm: quasi-linear to linear}, a quasi-linearly identifiable graphical model is linearly identifiable if and only if the sets $S_v$ defined earlier further satisfies the half-trek criterion with respect to $v$ for each vertex $v\in V$.
\end{proof}

\subsection{Connection to HTC-identifiable Graphs}

Note that from Definition \ref{dfn: HTC-identifiable}, we can rewrite the definition of HTC-identifiability as follows.

\begin{dfn}
\label{prop: HTC-rewritten}
Let $G$ be an HTC-identifiable graph. Then, $\M(G)$ is HTC-identifiable if and only if for each vertex $v$, there is a total ordering $\prec$ such that we can find a vertex set $S_v$ satisfying both
\begin{enumerate}
    \item $S_v$ satisfies the half-trek criterion with respect to $v$, and
    \item For each vertex $j\in S_v$, either $j\prec v$ or there are no half-treks from $v$ to $j$.
\end{enumerate}
\end{dfn}

\begin{thm}
For any mixed graph $G$, $\M(G)$ is linearly identifiable if and only if $\M(G)$ is HTC-identifiable.
\end{thm}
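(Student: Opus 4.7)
The plan is to recognise that the two definitions assert the existence of the same certifying family $\{S_v\}_{v\in V}$, differing only in how the ``earlier than $v$'' relation on $S_v\cap\mathrm{htr}(v)$ is encoded: linear identifiability requires that each such $j$ itself satisfies the linear identifiability criterion via the recursion, while HTC-identifiability requires a global total ordering $\prec$ in which each such $j$ satisfies $j\prec v$. I would prove both directions by translating between the well-founded recursion on one side and the total ordering on the other.

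For the forward direction (linear $\Rightarrow$ HTC), suppose $\M(G)$ is linearly identifiable. By the preceding proposition, there is a sequence $v_1,\dots,v_n$ of the vertices such that the verification of the linear identifiability criterion at $v_i$ only appeals to vertices from $\{v_1,\dots,v_{i-1}\}$ in the recursive clause. Declare $v_i\prec v_j$ iff $i<j$, and take the sets $S_{v_i}$ produced by the linear identifiability recursion. By construction, each $S_{v_i}$ satisfies the half-trek criterion with respect to $v_i$, and for every $j\in S_{v_i}\cap\mathrm{htr}(v_i)$ the recursive clause forces $j$ to have already been certified before $v_i$, giving $j\prec v_i$. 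These are exactly the two conditions in Definition \ref{prop: HTC-rewritten}, so $\M(G)$ is HTC-identifiable.

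For the reverse direction (HTC $\Rightarrow$ linear), I would induct along $\prec$. Assume, as an induction hypothesis, that every $w\prec v$ satisfies the linear identifiability criterion, and let $S_v$ be the HTC-certifying set for $v$. For each $j\in S_v\cap\mathrm{htr}(v)$, HTC-identifiability forces $j\prec v$, so by the induction hypothesis $j$ satisfies the linear identifiability criterion. Together with the fact that $S_v$ satisfies the half-trek criterion with respect to $v$, this is exactly what is needed for $v$ to satisfy the linear identifiability criterion. The base case (a $\prec$-minimal vertex $v$) is handled by the same argument, since for such $v$ the set $S_v\cap\mathrm{htr}(v)$ must be empty.

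The only genuine subtlety is to confirm that ``recursively verify'' in the linear case is a well-founded procedure and thus genuinely produces an enumeration of the vertices; this is precisely what the preceding proposition packages for us. Once that is in hand, both implications reduce to the observation that ``certified earlier in the recursion'' and ``earlier in a total order'' are interchangeable encodings of the same predicate on $S_v\cap\mathrm{htr}(v)$, so no further graph-theoretic work is needed beyond invoking Lemma \ref{lem: redfn} and the results of the previous two subsections.
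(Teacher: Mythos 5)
Your proposal is correct and follows essentially the same route as the paper: the forward direction turns the order in which vertices are certified by the recursion into the total ordering $\prec$, and the reverse direction is the same induction along $\prec$, with the $\prec$-minimal case handled by noting that every $j\in S_v$ must then have no half-trek from $v$. Your extra remark about well-foundedness of the recursion is a sensible precaution but does not change the argument.
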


\begin{proof}
($\Rightarrow$) Define a total ordering on the vertex set where $j\prec v$ if we defined $j$ to satisfy the linear identifiability criterion before $v$ in the recursion. Hence, the conditions in Definition \ref{dfn: linear-identifiability-criterion} are the same as those in Definition \ref{prop: HTC-rewritten}.

($\Leftarrow$) We shall proceed by induction. First, let 1 be the first vertex in the total ordering $\prec$. Then for each vertex $j\in S_1$, there are no half-treks from $1$ to $j$, hence, $1$ satisfies the linear identifiability criterion.

Now, suppose that vertices $1\prec\dots\prec k$ satisfies the linear identifiability criterion. We want to show that the $k+1^{\text{st}}$ vertex in the total ordering $\prec$ also satisfies the linear identifiability criterion. Since $\M(G)$ is HTC-identifiable, there exists some $S_{k+1}$ satisfying the half-trek criterion with respect to $k+1$ and if $j\in S_{k+1}$, either $j\prec k+1$ or there are no half-treks from $v$ to $j$. But if $j\prec k+1$, by our induction hypothesis, $j$ satisfies the linear identifiability criterion. Hence, $k+1$ also satisfies the linear identifiability criterion.
\end{proof}

Linear identifiability can be used to explain the algebraic properties of HTC-identifiable graphs and graphs that are not HTC-identifiable.

\begin{ex}
\label{ex: HTC-inconclusive}
The graph in Figure \ref{graph: HTC-inconclusive} that is modified from Example \ref{ex: not solvable} with an additional vertex added.
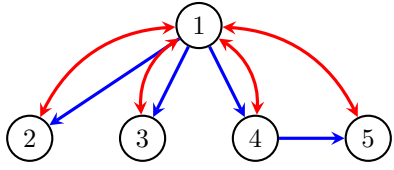
\begin{figure}
\centering
  \begin{tikzpicture}
  [rv/.style={circle, draw, thick, minimum size=6mm, inner sep=0.5mm}, node distance=15mm, >=stealth]
  \pgfsetarrows{latex-latex};
  \begin{scope}
  \node[rv]  (1)              {1};
  \node[rv, below of=1, yshift=0mm, xshift=-7.5mm] (3) {3};
  \node[rv, left of=3, yshift=0mm, xshift=0mm] (2) {2};
  \node[rv, right of=3, yshift=0mm, xshift=0mm] (4) {4};
  \node[rv, right of=4, yshift=0mm, xshift=0mm] (5) {5};
  \draw[->, very thick, color=blue] (1) -- (2);
  \draw[->, very thick, color=blue] (1) -- (3);
  \draw[->, very thick, color=blue] (1) -- (4);
  \draw[->, very thick, color=blue] (4) -- (5);
  \draw[<->, very thick, color=red] (1) to[bend right] (2);
  \draw[<->, very thick, color=red] (1) to[bend right] (3);
  \draw[<->, very thick, color=red] (1) to[bend left] (4);
  \draw[<->, very thick, color=red] (1) to[bend left] (5);
  \end{scope}
    \end{tikzpicture}
    \caption{A generically identifiable graph that is not HTC-identifiable.}
    \label{graph: HTC-inconclusive}
\end{figure}
This graph is not linearly identifiable as we cannot find an $S_v$ satisfying both conditions of Definition \ref{dfn: linear-identifiability-criterion} for any vertex $v$. Hence, it is not HTC-identifiable.
Suppose we have
$$\Lambda=\left[\begin{array}{ccccc}
     0 & 0.25 & 0.5 & 0.4 & 0  \\
     0 & 0 & 0 & 0 & 0  \\
     0 & 0 & 0 & 0 & 0  \\
     0 & 0 & 0 & 0 & 0.3  \\
     0 & 0 & 0 & 0 & 0  
\end{array}\right]\quad\text{and}\quad\Omega=\left[\begin{array}{ccccc}
     1 & 0.1 & 0.1 & 0.1 & 0.1  \\
     0.1 & 1 & 0 & 0 & 0  \\
     0.1 & 0 & 1 & 0 & 0  \\
     0.1 & 0 & 0 & 1 & 0  \\
     0.1 & 0 & 0 & 0 & 1  
\end{array}\right]$$
As we have seen in Example \ref{ex: not solvable}, equating $b_{32}=b_{42}=b_{43}=0$, we obtain a quadratic equation in $\lambda_{12}$. Solving this, we obtain $\lambda_{12}=0.25$ or 0.45. If we equate $b_{32}=b_{52}=b_{53}=0$ and solve the corresponding quadratic equation, we also obtain $\lambda_{12}=0.25$ or 0.45. However, equating $b_{42}=b_{52}=b_{54}=0$ and solving the corresponding quadratic equation, we obtain $\lambda_{12}=0.25$ or 0.354. Since only $\lambda_{12}=0.25$ is a common solution, this graph is generically identifiable, even though it is not linearly identifiable.
\end{ex}

\section{Constraints in Linearly Identifiable Graphs}

In this section, we will demonstrate some applications of our result by exploiting the algebraic properties of HTC-identifiable graphs to compute $\Lambda$ and the model ideal defined as
\begin{dfn}
An ideal $J(\M(G))$ is said to be the \emph{model ideal} of $G$ if it is generated by all the equality constraints of $\M(G)\cap PD_V$.
\end{dfn}
While algorithms for HTC-identifiable graphs for finding $\Lambda$ already exists \cite{foygel2012half}, our argument will be purely algebraic in nature. We will also prove that the generators we find for the model ideal are minimal for some subset of HTC-identifiable graphs.

\subsection{Generators of the Model Ideal}
Let $v$ be a vertex in a linearly identifiable graph $G$. Suppose $S_v=\{s_1,\dots,s_k\}$ and $\pa(v)=\{p_1,\dots,p_k\}$. Then,
$$
b_{s_iv}=a_{s_iv}-\sum\limits_{\ell\in\pa(v)}\lambda_{\ell v}a_{s_i\ell}=0
$$
for each $1\leq i\leq k$. We can rewrite this into a matrix
\begin{align}
\label{eqn: lambda-matrix}
\left[\begin{array}{ccc}
         a_{s_1p_1} & \dots & a_{s_1p_k} \\
         \vdots &  & \vdots \\
         a_{s_kp_1} & \dots & a_{s_kp_k}
    \end{array}\right]\left[\begin{array}{c}
     \lambda_{p_1v} \\
     \vdots \\
     \lambda_{p_kv}
\end{array}\right]=\left[\begin{array}{c}
         a_{s_1v} \\
          \vdots \\
          a_{s_kv}
    \end{array}\right].
\end{align}

By definition of linearly identifiability, each of the $a_{ij}$ can be expressed solely in terms of $\Sigma$ and the square matrix on the left is invertible.
Since linearly identifiable graphs are also HTC-identifiable, we have a total ordering on the vertices of the graph. This leads us to an algorithm to recover $\Lambda$, $\Omega$ and the generators for the model ideal $J(\M(G))$.

\begin{algo}
\label{algo: Lambda}
\end{algo}
\vspace{-2.5mm}
\begin{algorithm}[H]
 \KwIn{HTC-identifiable graph $G$, the associated sets $\{S_v\mid v\in V\}$ satisfying the half-trek criterion wrt $v\in V$ and the total ordering $1\prec\dots\prec|V|$.}
 \KwOut{Symbolic values of all regression coefficients as a matrix $\Lambda=(\lambda_{ij})$.}
 {\bf Initialise: }$v=1$\;
 \While{$v\preceq |V|$}{
  \For{$s_1,\dots,s_k\in S_v$ and $p_1,\dots,p_k\in\pa(v)$\vspace{1mm}}{
  Solve for $\lambda_{p_iv}$ for all $1\leq i\leq k$ using the equation:
   $$\left[\begin{array}{c}
     \lambda_{p_1v} \\
     \vdots \\
     \lambda_{p_kv}
\end{array}\right]
    =\left[\begin{array}{ccc}
         a_{s_1p_1} & \dots & a_{s_1p_k} \\
         \vdots &  & \vdots \\
         a_{s_kp_1} & \dots & a_{s_kp_k}
    \end{array}\right]^{-1}
    \left[\begin{array}{c}
         a_{s_1v} \\
          \vdots \\
          a_{s_kv}
    \end{array}\right],$$
where $a_{ij}=\sigma_{ij}-\sum_{\ell\in\pa(i)}\lambda_{\ell i}\sigma_{\ell j}$.
   }
   $v=v+1$.
 }
 \Return $\Lambda=(\lambda_{ij})$.
\end{algorithm}
\vspace{2mm}

If we were given the numerical values of $\Sigma$, Algorithm \ref{algo: Lambda} could also be used to compute the values of $\Lambda$ numerically. 
After obtaining either the numeric or the symbolic values of $\Lambda$, we can compute the values for $\Omega$ using the equation 
\begin{align}
\label{eqn: recover-Omega}
    \Omega=(I-\Lambda)^T\Sigma(I-\Lambda).
\end{align}
If both $\Lambda$ and $\Omega$ were computed symbolically, we could proceed to find the generators the model ideal.

\begin{ex}
\begin{figure}
\centering
  \begin{tikzpicture}
  [rv/.style={circle, draw, thick, minimum size=6mm, inner sep=0.5mm}, node distance=15mm, >=stealth]
  \pgfsetarrows{latex-latex};
  \begin{scope}
  \node[rv]  (2)              {2};
  \node[rv, below of=2, yshift=0mm, xshift=0mm] (1) {1};
  \node[rv, right of=2, yshift=0mm, xshift=0mm] (3) {3};
  \node[rv, right of=3, yshift=0mm, xshift=0mm] (4) {4};
  \node[rv, below of=4, yshift=0mm, xshift=0mm] (5) {5};
  \draw[->, very thick, color=blue] (1) -- (2);
  \draw[->, very thick, color=blue] (2) to[bend right] (3);
  \draw[->, very thick, color=blue] (3) to[bend right] (2);
  \draw[->, very thick, color=blue] (3) to[bend right] (4);
  \draw[->, very thick, color=blue] (4) to[bend left] (5);
  \draw[<->, very thick, color=red] (1) to[bend left] (2);
  \draw[<->, very thick, color=red] (4) to[bend right] (3);
  \draw[<->, very thick, color=red] (1) to[bend right] (4);
  \draw[<->, very thick, color=red] (4) -- (5);
  \draw[<->, very thick, color=red] (1) -- (5);
  \end{scope}
    \end{tikzpicture}
    \caption{An HTC-identifiable graph with cycles.}
    \label{graph: algo-ex}
\end{figure}
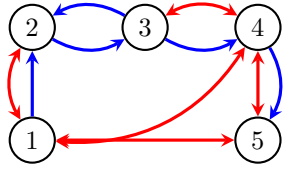
Consider the graph in Figure \ref{graph: algo-ex} with the total ordering $1\prec3\prec5\prec2\prec4$ and sets $S_1=\emptyset,S_3=\{1\},S_5=\{3\},S_2=\{3,5\},S_4=\{2\}$. Note that these sets satisfy the conditions in Definition \ref{prop: HTC-rewritten}.

Applying Algorithm \ref{algo: Lambda}, for the first vertex, there is nothing to do as $\pa(1)=\emptyset$ and hence we have no $\lambda$'s to solve for. For the next vertex, we have $\lambda_{23}={a_{12}}^{-1}a_{13}={\sigma_{12}}^{-1}\sigma_{13}$. Similarly, $\lambda_{45}={a_{34}}^{-1}a_{35}=(\sigma_{34}-\lambda_{23}\sigma_{24})/(\sigma_{35}-\lambda_{23}\sigma_{25})=(\sigma_{12}\sigma_{34}-\sigma_{13}\sigma_{24})/(\sigma_{12}\sigma_{35}-\sigma_{13}\sigma_{25})$ and so forth.
\end{ex}

\begin{thm}
\label{thm: generators of I(G)}
For any HTC-identifiable graph $G$, the model ideal, over the invariants $\Sigma$, is generated by
$$\left\langle \sigma_{ij}-\sum\limits_{\ell\in\pa(i)}\lambda_{\ell i}\sigma_{\ell j}-\sum\limits_{k\in\pa(j)}\lambda_{k j}\left(\sigma_{ik}-\sum\limits_{\ell\in\pa(i)}\lambda_{\ell i}\sigma_{\ell k}\right)\ \middle|\ i\leftrightarrow j\notin\B, i\notin S_j\text{ and }j\notin S_i\right\rangle,$$
where $\Lambda$ is understood to consists of rational functions in terms of $\Sigma$.
\end{thm}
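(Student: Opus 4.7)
The plan is to prove both inclusions between the proposed ideal $J$ and the model ideal $I := J(\M(G))$.

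\textbf{Forward inclusion $J \subseteq I$.} Take $\Sigma \in \M(G) \cap PD_V$, so $\Sigma = (I-\Lambda_0)^{-T}\Omega_0(I-\Lambda_0)^{-1}$ for some $(\Lambda_0,\Omega_0)\in\R^\D_{\mathrm{reg}}\times PD(\B)$. Since $G$ is HTC-identifiable, Algorithm~\ref{algo: Lambda} recovers $\Lambda_0$ as a rational function of $\Sigma$, so $\Lambda(\Sigma) = \Lambda_0$ on a Zariski-open dense subset of $\M(G)$. Rearranging equation (\ref{eqn: rearranged}) gives $(I-\Lambda_0)^{T}\Sigma(I-\Lambda_0) = \Omega_0$, whose $(i,j)$-entry is precisely $b_{ij}(\Sigma,\Lambda_0)$. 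For every missing bidirected edge $i\leftrightarrow j\notin\B$ we have $(\Omega_0)_{ij} = 0$, so the numerator of each generator of $J$ vanishes on a dense subset of the irreducible set $\M(G)$, hence everywhere on $\M(G)\cap PD_V$.

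\textbf{Reverse inclusion, varietal equality.} By HTC-identifiability, $\phi$ is birational onto its image, so $\M(G)$ is irreducible of dimension $d := |V|+|\D|+|\B|$ and $I$ is a prime ideal of height $\binom{|V|}{2}-|\D|-|\B|$. The number of generators of $J$ equals the number of missing bidirected edges $\binom{|V|}{2}-|\B|$ minus the $\sum_{v\in V}|S_v| = \sum_{v\in V}|\pa(v)| = |\D|$ pairs excluded by the conditions $i\in S_j$ or $j\in S_i$, matching the height of $I$ exactly. To check $V(J)\cap PD_V \subseteq \M(G)\cap PD_V$, take $\Sigma$ in the Zariski-dense subset where every $A_{S_v,\pa(v)}$ is invertible, define $\Lambda := \Lambda(\Sigma)$ via Algorithm~\ref{algo: Lambda}, and $\Omega := (I-\Lambda)^{T}\Sigma(I-\Lambda)$. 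The equations solved in Algorithm~\ref{algo: Lambda} force $\omega_{ij} = 0$ for every missing bidirected edge with $i\in S_j$ or $j\in S_i$, and the vanishing of the generators of $J$ at $\Sigma$ handles the remaining missing bidirected edges, so $\Omega\in PD(\B)$ and therefore $\Sigma\in\M(G)$.

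\textbf{Upgrading to ideal equality.} To conclude $I\subseteq J$, it suffices to show that $J$ is itself prime of the same height as $I$; combined with $J\subseteq I$, equality of heights then forces $J = I$. The cleanest route is to localize $\R[\Sigma]$ at $\Delta := \prod_{v\in V}\det A_{S_v,\pa(v)}$. In this localization, every entry of $\Lambda(\Sigma)$ lies in $\R[\Sigma]_\Delta$, and each generator of $J$ is, up to a unit, the image of a distinct free parameter $\omega_{ij}$ ($i\leftrightarrow j\notin\B$, $i\notin S_j$, $j\notin S_i$) under the trek-rule parameterization. Thus $(\R[\Sigma]/J)_\Delta$ is isomorphic to a localization of the polynomial ring in the free coordinates $\{\omega_{ii}\}\cup\{\omega_{ij}:i\leftrightarrow j\in\B\}\cup\{\lambda_{ij}:i\to j\in\D\}$, which is a domain.

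\textbf{Main obstacle.} The delicate step is passing from this localization back to $\R[\Sigma]$ itself: one must verify that $J$ is saturated with respect to $\Delta$, equivalently that $\Delta$ is a non-zero-divisor modulo $J$. I expect to handle this by noting that $\Delta$ restricted to $\M(G)$ is generically nonzero (since $\det A_{S_v,\pa(v)}$ is nonzero in the parameters by Lemma~3 of Foygel et al., invoked already in the proof of Theorem~\ref{thm: quasi-linear to linear}), so $\Delta$ is a non-zero-divisor modulo $I$; combined with $J\subseteq I$, any embedded component of $J$ supported on $V(\Delta)$ must lie inside $I$ as well, and the matching height then prevents such a component from being a proper contribution, yielding $J = I$.
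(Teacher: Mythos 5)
Your strategy is far more ambitious than, and quite different from, the paper's. The paper's proof is a short direct argument from its (informal) definition of the model ideal: equating entries in $(I-\Lambda)^T\Sigma(I-\Lambda)=\Omega$ shows every equality constraint arises as $b_{ij}=0$ for some missing bidirected edge; the $b_{ij}$ indexed by pairs with $i\in S_j$ or $j\in S_i$ are exactly the equations consumed by Algorithm \ref{algo: Lambda} to solve for $\Lambda$, so they become $0=0$ after substitution; the remaining ones are the stated generators. There is no dimension count, no primality claim, and no localization. Your forward inclusion matches the spirit of that argument, but your reverse inclusion tries to establish an ideal-theoretic equality that the paper never engages with at this level of rigour.

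The genuine gap is the one you flag yourself, and your sketch for closing it does not work. The localization argument at best yields $J_\Delta=I_\Delta$, equivalently $J:\Delta^\infty=I$; to conclude $J=I$ you must show $J$ is saturated with respect to $\Delta$, and neither primality of $I$ nor equality of heights delivers this: $(x^2)\subseteq(x)$ is a containment of ideals of equal height with prime target, yet it is strict. The claim that a component of $J$ supported on $V(\Delta)$ "must lie inside $I$" is backwards (primary components contain $J$; they need not be contained in $I$), and matching height does not exclude embedded primes or non-reduced structure along $V(\Delta)$. The worry is concrete: when you clear denominators in the substituted $b_{ij}$, the numerators may acquire extraneous factors of the determinants $\det A_{S_v,\pa(v)}$, so saturation is a substantive verification, not a formality --- already for the Verma graph one must check that the numerator of $b_{34}$ is $f_{\mathrm{Verma}}$ on the nose rather than a proper multiple of it. Separately, identifying the model ideal $I$ with a prime of height $\binom{|V|}{2}-|\D|-|\B|$ silently equates it with the vanishing ideal of $\overline{\M(G)}$, which the paper explicitly declines to do; if you want to run this argument you must first fix the ambient ring for the theorem's "ideal of rational functions" (most charitably $\R[\Sigma]_\Delta$, in which case your localization step essentially is the whole proof and the saturation issue evaporates --- but then you should say so explicitly).
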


\begin{proof}
The equality constraints in $\M(G)\cap PD_V$ correspond to the polynomials $f(\Sigma)=0$ for all $\Sigma\in\M(G)\cap PD_V$. Equating entries in (\ref{eqn: rearranged}), we see that these constraints have to be of the form $b_{ij}=0$ for all $i\leftrightarrow j\notin\B$. From Algorithm \ref{algo: Lambda}, we used the system of equations $\{b_{s_iv}=0\mid s_i\in S_v\}$ to solve for $\lambda_{p_jv}$ for all $p_j\in\pa(v)$. Hence if either $i\in S_j$ or $j\in S_i$, substituting these $\lambda$'s back into the polynomial $b_{ij}=0$ will result in a trivial equation $0=0$. 
\end{proof}

In fact, we see that each missing bidirected edge is used either as part of a system of equations for $\Lambda$, or is a generator for the model ideal $J(\M(G))$. In particular, our choice for $J(\M(G))$ is generated by ${|V|\choose 2}-|\D|-|\B|$ polynomials. Note that the result of Theorem \ref{thm: generators of I(G)} is very similar to \cite[Theorem 1]{van2018algebraic} where it was proven that $b_{ij}\in\I(G)$ if $i\leftrightarrow j\notin\B, i\notin S_j\text{ and }j\notin S_i$. Here, $\I(G)$ is the vanishing ideal of $\M(G)$. However, in Theorem \ref{thm: generators of I(G)}, we showed that these polynomials are in fact the generator for the model ideal $J(M(G))$.

\subsection{Minimality of the Generators}

Now that we can find the generators of $J(M(G))$ for any HTC-identifiable $G$, it is natural to ask if generators of the model ideal found in Theorem \ref{thm: generators of I(G)} are in fact the minimal generators in the following sense:

\begin{dfn}
\label{dfn: minimal generators}
We say that the polynomials $f_1,\dots, f_n$ are \emph{minimal generators} for $I$ if we have $\langle f_1,\dots,f_n\rangle=I$ but $\langle f_1,\dots,f_{i-1},f_{i+1},\dots,f_n\rangle\neq I$ for any $1\leq i\leq n$. In other words, the polynomials $f_1,\dots, f_n$ generate $I$ but if we remove any of the $f_i$ for $1\leq i\leq n$, the remaining polynomials no longer generate $I$.
\end{dfn}

First, we will define a subset of HTC-identifiability:
\begin{dfn}
The sets $\{S_v\mid v\in V\}$ in an HTC-identifiable graph has \emph{no subset cycles} if it satisfies the criterion in Definition \ref{prop: HTC-rewritten} and there do not exist vertices $v_1,\dots,v_n$ with $v_1\in S_{v_2},v_2\in S_{v_3},\dots,v_n\in S_{v_1}$.
\end{dfn}

\begin{lem}
\label{lem: add-edges}
Suppose that we have an HTC-identifiable graph $G=(V,\D,\B)$ with a collection of sets $\{S_v\mid v\in V\}$ that has no subset cycles. Further suppose that there exists vertices $i,j\in V$ such that $i\leftrightarrow j\notin\B, i\notin S_j\text{ and }j\notin S_i$. Then, we can add the bidirected edge $i\leftrightarrow j$ to obtain another HTC-identifiable graph with the sets $\{S_v\mid v\in V\}$ unchanged.
\end{lem}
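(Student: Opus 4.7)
The plan is to keep the family $\{S_v \mid v \in V\}$ unchanged in the enlarged graph $G' := (V, \D, \B \cup \{i \leftrightarrow j\})$ and verify that the two conditions of Definition \ref{prop: HTC-rewritten} still hold. First I would check that each $S_v$ continues to satisfy the half-trek criterion with respect to $v$ in $G'$. The cardinality requirement $|S_v| = |\pa(v)|$ is automatic since the directed edges of $G$ and $G'$ coincide. For the sibling condition, the only vertices whose sibling sets change are $i$ and $j$, which acquire $j$ and $i$ respectively; the hypotheses $i \notin S_j$ and $j \notin S_i$ then ensure $S_v \cap (\{v\} \cup \sib_{G'}(v)) = \emptyset$ for every $v$. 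For the system of half-treks with no sided intersection from $S_v$ to $\pa(v)$: every half-trek in $G$ remains a half-trek in $G'$ with unchanged left/right sides, so the same system witnesses the property in $G'$.

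Second, I would produce a compatible total ordering, which is where the no-subset-cycles hypothesis plays its role. I would form the auxiliary digraph $H$ on $V$ with edge set $\{(v, w) : w \in S_v\}$. By hypothesis $H$ is acyclic, so it admits a topological ordering $\prec$. With this $\prec$, every $j \in S_v$ satisfies $j \prec v$, so the disjunction ``$j \prec v$ or there are no half-treks from $v$ to $j$'' in Definition \ref{prop: HTC-rewritten} holds trivially for every $v$ and every $j \in S_v$, regardless of the half-trek structure of $G'$. This establishes HTC-identifiability of $G'$ with the original sets $\{S_v\}$.

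The main obstacle to navigate is that adding $i \leftrightarrow j$ can genuinely create new half-treks in $G'$, specifically those of the form $v \leftrightarrow w \to \cdots \to u$ with $\{v, w\} = \{i, j\}$, so that the half-trek reachability $\mathrm{htr}_{G'}(v)$ may strictly contain $\mathrm{htr}_G(v)$. A total ordering in $G$ that relied on the ``no half-trek from $v$ to $j$'' clause to permit $v \prec j$ for some $j \in S_v$ could therefore be invalidated in $G'$. The no-subset-cycles hypothesis sidesteps this issue entirely by furnishing a purely combinatorial ordering that depends only on the sets $\{S_v\}$ and is immune to any enlargement of half-trek reachability.
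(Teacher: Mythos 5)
Your proposal is correct and follows essentially the same route as the paper: verify that each $S_v$ still satisfies the half-trek criterion in the enlarged graph (cardinality unchanged, sibling condition protected by $i\notin S_j$ and $j\notin S_i$, existing half-trek systems preserved), then use the no-subset-cycles hypothesis to topologically order the auxiliary digraph induced by the relation $w\in S_v$, which yields a total ordering that is unaffected by any growth of half-trek reachability. The only cosmetic difference is that the paper orders the digraph with edge set $\{w\to v\mid w\in S_v\cap\mathrm{htr}(v)\}$ rather than your slightly larger $\{w \to v \mid w\in S_v\}$; both are acyclic under the hypothesis and give the required ordering.
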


\begin{proof}
Firstly, we check that each $S_v$ still satisfies the half-trek criterion with respect to $v$. 
\begin{enumerate}
    \item $|S_v|=|\pa(v)|$ still holds as adding bidirected edges does not impact the number of parents of any vertex $v$ while each $S_v$ remains unchanged.
    \item Since $S_i\cap(\{i\}\cup\sib(i))=\emptyset$, adding the bidirected edge $i\leftrightarrow j$ where $j\notin S_i$ preserves that equality. By symmetry, $S_j\cap(\{j\}\cup\sib(j))=\emptyset$. For any other vertices $w$, the equation $S_w\cap(\{w\}\cup\sib(w))=\emptyset$ is preserved as $\sib(w)$ and $S_w$ remains unchanged.
    \item There still exists a system of half-treks with no sided intersection from $S_v$ to $\pa(v)$, as adding the bidirected edge does not impact half-treks.
\end{enumerate}
Recall that a graph is HTC-identifiable if $S_v$ satisfies the half-trek criterion with respect to each vertex $v\in V$ and $w\prec v$ whenever $w\in S_v\cap\text{htr}(v)$. For the second condition, it is equivalent to view the total ordering as a topological ordering on the directed graph $G'=(V',\D',\B')$ with $V'=V$, $\D'=\{w\to v\mid w\in S_v\cap\text{htr}(v)\}$, $\B'=\emptyset$. If the sets $S_v$ has no subset cycles, $G'$ is a DAG and a topological ordering exists. Therefore, leaving $S_v$ unchanged, the new graph satisfies both conditions of HTC-identifiability.
%Suppose this is no longer true if we add the edge $i\leftrightarrow j$. Then there must exist a vertex $v_1$ with $v_1\in S_i\cap\text{htr}(i)$ and $i\prec v_1$. Now, we create a new total ordering by moving every vertex in $S_i\cap\text{htr}(i)$ before $i$, that is from $1\prec\dots\prec i-1\prec i\prec\dots\prec v_1\prec\dots$ to $1\prec\dots\prec i-1\prec v_1\prec i\prec\dots$ for $v_1\in S_i\cap\text{htr}(i)$. Now, for every $v_1\in S_i\cap\text{htr}(i)$, we can repeat the process earlier and move every vertex in $S_{v_1}\cap\text{htr}(v_1)$ before $v_1$, and for every $v_2\in S_{v_1}\cap\text{htr}(v_1)$, we move every vertex in $S_{v_2}\cap\text{htr}(v_2)$ before $v_2$. We claim that this process terminates. Suppose it does not terminate, then we must have a cycle in the form of $i\in S_{v_n}\cap\text{htr}(v_n), v_1\in S_i\cap\text{htr}(i),\dots, v_{n}\in S_{v_{n-1}}\cap\text{htr}(v_{n-1})$. This contradicts the assumption that $G$ has no subset cycles.
\end{proof}

\begin{thm}
\label{thm: minimal-ideal}
If $G$ is an HTC-identifiable graph with no subset cycles, then the generators of the model ideal $J(\M(G))$ found in Theorem \ref{thm: generators of I(G)} are minimal.
\end{thm}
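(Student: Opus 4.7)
The plan is to prove minimality pair-by-pair: for each generator $b_{ij}$ appearing in the list of Theorem \ref{thm: generators of I(G)}, I will exhibit a covariance matrix on which every other generator vanishes but $b_{ij}$ does not. Such a witness forces $b_{ij}$ to lie outside the ideal generated by the remaining polynomials, establishing that the generating set cannot be shrunk.

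Fix a pair $(i,j)$ with $i\leftrightarrow j\notin\B$, $i\notin S_j$, and $j\notin S_i$, and let $G'=(V,\D,\B\cup\{i\leftrightarrow j\})$. Because $G$ has no subset cycles and $i\notin S_j$, $j\notin S_i$, Lemma \ref{lem: add-edges} ensures that $G'$ is HTC-identifiable using the \emph{same} system $\{S_v\mid v\in V\}$. Applying Theorem \ref{thm: generators of I(G)} to $G'$ then expresses $J(\M(G'))$ as the ideal generated by precisely the same polynomials that generate $J(\M(G))$, except that $b_{ij}$ is omitted (the pair $(i,j)$ is no longer a missing bidirected edge in $G'$). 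Thus minimality for the pair $(i,j)$ reduces to the statement $J(\M(G'))\subsetneq J(\M(G))$, i.e.\ to $b_{ij}\notin J(\M(G'))$.

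To produce the witness, choose $(\Lambda,\Omega)\in\R^\D_{\mathrm{reg}}\times PD(\B\cup\{i\leftrightarrow j\})$ generically with $\omega_{ij}\neq 0$, and set $\Sigma=(I-\Lambda)^{-T}\Omega(I-\Lambda)^{-1}\in\M(G')$. Since the sets $\{S_v\}$ are identical for $G$ and $G'$, Algorithm \ref{algo: Lambda} recovers this same $\Lambda$ from $\Sigma$ as a rational function in its entries. The identity $(I-\Lambda)^T\Sigma(I-\Lambda)=\Omega$ then gives $b_{ij}(\Sigma)=\omega_{ij}\neq 0$, while every other generator in the Theorem \ref{thm: generators of I(G)} list evaluates to $\omega_{kl}=0$ on this $\Sigma$. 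Any element of $J(\M(G'))$ must vanish on $\Sigma\in\M(G')\cap PD_V$, so $b_{ij}\notin J(\M(G'))$, as required. Iterating this construction over every pair $(i,j)$ in the generating set completes the argument.

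The main technical obstacle is making the ``generically'' clause precise. Algorithm \ref{algo: Lambda} inverts submatrices $A_{S_v,\pa(v)}$ whose determinants are polynomials in $\Sigma$, and the identification of the recovered $\Lambda$ with the chosen $\Lambda$ requires these determinants to be nonzero at $\Sigma$. By Lemma 3 of Foygel et al.\ \cite{foygel2012half} (as already invoked in the proof of Theorem \ref{thm: quasi-linear to linear}), the half-trek criterion guarantees that each such determinant is a nonzero polynomial in the parameters, so its nonvanishing defines a Zariski-open dense subset of $\R^\D_{\mathrm{reg}}\times PD(\B')$. Intersecting all these open conditions with the open condition $\omega_{ij}\neq 0$ still gives a nonempty set, from which a valid witness $(\Lambda,\Omega)$ can be drawn; the rest of the argument proceeds as above.
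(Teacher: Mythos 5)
Your proposal is correct and follows essentially the same route as the paper's proof: both add the bidirected edge $i\leftrightarrow j$ to form $G'$, invoke Lemma \ref{lem: add-edges} to keep the same sets $\{S_v\}$ and hence the same remaining generators via Theorem \ref{thm: generators of I(G)}, and conclude that $b_{ij}=\omega_{ij}$ cannot lie in $J(\M(G'))$ because $\omega_{ij}$ is unconstrained on $\M(G')$. Your version merely makes the witness $\Sigma$ and the genericity of the recovery step explicit, which the paper leaves implicit.
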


\begin{proof}
Suppose that $J(\M(G))=\langle f_1,\dots,f_n\rangle$ is not a minimal generator of $J(\M(G))$. Without loss of generality, suppose that the constraint $f_1$ corresponding to the missing edge between $v$ and $w$ is redundant such that $J(\M(G))=\langle f_2,\dots,f_n\rangle$. Now, consider the graph $G'$ obtained by adding the bidirected edge $v\leftrightarrow w$ to $G$. Note that $G'$ is also an HTC-identifiable graph by Lemma \ref{lem: add-edges}, with $J(\M(G'))=\langle f_2,\dots,f_n\rangle$.

However, since the constraint $f_1$ can be generated by polynomials generating $J(\M(G'))$, the bidirected edge $v\leftrightarrow w$ in $G'$ can only take one value $\omega_{vw}=0$, which is a contradiction.
\end{proof}

\begin{cor}
If $G$ is a bow-free acyclic graph, then the generators of the model ideal $J(\M(G))$ found in Theorem \ref{thm: generators of I(G)} are minimal.
\end{cor}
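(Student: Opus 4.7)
The plan is to deduce the corollary as a direct specialization of Theorem \ref{thm: minimal-ideal}. That theorem delivers minimality whenever (i) the graph is HTC-identifiable via some family $\{S_v\}$, and (ii) that family has no subset cycles. So my task is just to exhibit a witness family $\{S_v\}$ for a bow-free acyclic $G$ and verify both properties.

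First, I would take $S_v := \pa(v)$ for every $v \in V$, which is exactly the canonical choice pointed out in the paper immediately after Definition \ref{dfn: HTC-identifiable}: for a bow-free acyclic graph, setting $Y_v = \pa(v)$ witnesses HTC-identifiability, because (a) $|\pa(v)|=|\pa(v)|$ trivially, (b) bow-freeness ensures $\pa(v)\cap(\{v\}\cup\sib(v))=\emptyset$, and (c) the trivial half-treks $p \leftarrow p \to v$ (i.e., $\pi^L$ empty, $\pi^R$ the edge $p\to v$, source $p$) from $\pa(v)$ to $\pa(v)$ give a system with no sided intersection. Acyclicity then provides a topological order that serves as $\prec$ in Definition \ref{dfn: HTC-identifiable}.

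Second, I verify the no-subset-cycles condition. Suppose for contradiction there were vertices $v_1,\dots,v_n$ with $v_1\in S_{v_2},\ v_2\in S_{v_3},\ \dots,\ v_n\in S_{v_1}$. With our choice $S_v=\pa(v)$, this means $v_i\in\pa(v_{i+1})$ (indices mod $n$), i.e., we have directed edges $v_1\to v_2\to\cdots\to v_n\to v_1$, which is a directed cycle. This contradicts the acyclicity of $G$, so no subset cycle can exist.

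With both hypotheses of Theorem \ref{thm: minimal-ideal} verified, minimality of the generating set from Theorem \ref{thm: generators of I(G)} follows immediately. There is no real obstacle here: the entire argument is a two-line unpacking, and the only mildly substantive step is recognizing that a subset cycle in $\{\pa(v)\}$ is literally a directed cycle in $G$. Thus the corollary reduces to combining the observation of Brito--Pearl/Foygel et al.\ that bow-free acyclic graphs admit $S_v=\pa(v)$ as an HTC witness with the fact that acyclicity forbids subset cycles for this particular witness.
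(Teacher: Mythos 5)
Your proposal is correct and follows exactly the paper's own argument: take $S_v=\pa(v)$, observe that acyclicity rules out subset cycles for this choice, and invoke Theorem \ref{thm: minimal-ideal}. You simply spell out in more detail the verification of the half-trek criterion and the identification of a subset cycle with a directed cycle, which the paper leaves implicit.
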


\begin{proof}
In a bow-free acyclic graph, we have $S_v=\pa(v)$. Since the graph is acyclic, these sets contain no subset cycles. The result follows from Theorem \ref{thm: minimal-ideal}.
\end{proof}

\begin{ex}
\label{ex: add-edges-fail}
If the conditions of Lemma \ref{lem: add-edges} fail, it is possible to create a graph such that adding a bidirected edge that is not HTC-identifiable.
\begin{figure}
  \begin{center}
  \begin{tikzpicture}
  [rv/.style={circle, draw, thick, minimum size=6mm, inner sep=0.5mm}, node distance=15mm, >=stealth]
  \pgfsetarrows{latex-latex};
  \node[rv]  (1)              {1};
  \node[rv, right of=1, yshift=0mm, xshift=0mm] (2) {2};
  \node[rv, right of=2, yshift=0mm, xshift=0mm] (3) {3};
  \node[rv, right of=3, yshift=0mm, xshift=0mm] (4) {4};
  \node[rv, right of=4, yshift=0mm] (5) {5};
  \node[rv, below of=3, yshift=0mm, xshift=0mm] (6) {6};
  \draw[<->, very thick, color=red] (1) -- (2);
  \draw[->, very thick, color=blue] (2) -- (3);
  \draw[<->, very thick, color=red] (2) to[bend left=20] (3);
  \draw[<->, very thick, color=red] (3) -- (4);
  \draw[->, very thick, color=blue] (4) -- (5);
  \draw[<->, very thick, color=red] (4) to[bend left=20] (5);
  \draw[<->, very thick, dashed, color=red] (4) to[bend right=40] (1);
  \draw[<->, very thick, color=red] (1) to[bend right] (6);
  \draw[<->, very thick, color=red] (6) -- (3);
  \draw[->, very thick, color=blue] (6) -- (1);
  \draw[<->, very thick, color=red] (5) to[bend right=40] (2);
  \draw[<->, very thick, color=red] (5) -- (6);
    \end{tikzpicture}
 \caption{An HTC-identifiable graph that is not HTC-identifiable after adding the dotted edge.}
  \label{fig: add-edges}
  \end{center}
\end{figure}
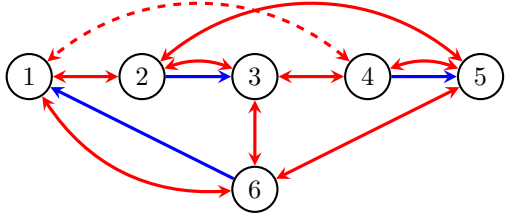

Consider the graph in Figure \ref{fig: add-edges}. Before adding the bidirected edge $1\leftrightarrow 4$, we must have $S_5=\{3\}$. Now, for $S_3$, we can have either $S_3=\{1\}$ or $S_3=\{5\}$. However, if we pick the latter, we have $3\in S_5$ and $5\in S_3$ which is HTC-nonidentifiable \cite[Theorem 2]{foygel2012half}. Similarly, we have either $S_1=\{3\}$ or $S_1=\{5\}$ but by picking the former we have $3\in S_1$ and $1\in S_3$ which is HTC-nonidentifiable. Hence, the only choice is $S_1=\{5\}$, $S_3=\{1\}$ and $S_5=\{3\}$.

Now, we have $1\notin S_4$ and $4\notin S_1$. Adding the edge $1\leftrightarrow 4$, we obtain $1\in S_3\cap\text{htr}(3)$, $3\in S_5\cap\text{htr}(5)$ and $5\in S_1\cap\text{htr}(1)$. Hence, the resulting graph is no longer HTC-identifiable as we can no longer find a total ordering.
\end{ex}
%%%%%%%%%%%%%%%%%%%%%%%%%%%%%%%%%%%%%%%%%%%%%%%%%%%%%%%%%%%

\subsection{Time Complexity}
In this section, we will show that if $S_v$ is given for every $v\in V$, we are able to compute $\Lambda$ and $\Omega$ numerically in polynomial time. This is useful if we are working in graphs whose $S_v$ is known or easy to find, such as acyclic bow-free graphs where $S_v=\pa(v)$.

\subsubsection{Numerical Computations}
We first introduce a naive bound for the numerical complexity of the above algorithm based on the number of vertices of $G$. We shall also assume that our numeric values of $\Sigma$ do not result in any singular matrices in Algorithm \ref{algo: Lambda}.

\begin{thm}
Suppose that $G$ is HTC-identifiable, then the complexity of finding $S_v$ for each vertex $v$ is at most $O(|V|^2(|V|+r)^3)$ where $r\leq|\D|/2$ is the number of reciprocal edge pairs in $\D$.
\end{thm}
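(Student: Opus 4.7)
The plan is to adapt the max-flow construction of Foygel et al.\ \cite{foygel2012half} so that, instead of merely deciding HTC-identifiability, it produces the sets $S_v$ explicitly. The key reduction is that for a fixed vertex $v$ and a fixed set $A \subseteq V$ of currently "allowed" candidates, the existence of an $S_v \subseteq A$ satisfying the half-trek criterion with respect to $v$ is equivalent to the existence of a maximum flow of value $|\pa(v)|$ in a suitably constructed network, and a valid $S_v$ can be read off the flow decomposition.

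First, I would describe the flow network for a single vertex $v$. Following \cite{foygel2012half}, each vertex of $G$ contributes a constant number of nodes with capacity-one gadgets that enforce the no-sided-intersection condition, and the directed/bidirected edges translate into unit-capacity arcs. To accommodate directed cycles, each reciprocal edge pair $i \to j$, $j \to i$ (of which there are at most $r \le |\D|/2$) requires an auxiliary gadget so that augmenting paths correspond bijectively to valid half-trek systems. The resulting network has $O(|V|+r)$ nodes, so a cubic max-flow subroutine (e.g.\ push-relabel) finds a max flow in $O((|V|+r)^3)$ time.

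Second, I would build the required total ordering incrementally. Initially let $A_0$ be the set of vertices $j$ for which there is no half-trek from the vertex currently under consideration; in each round we perform a max-flow test for every not-yet-placed vertex $v$ to decide whether $S_v \subseteq A$ can be chosen. Any $v$ whose test succeeds is appended to the ordering and added to $A$ for subsequent rounds. Because HTC-identifiability guarantees that at least one vertex is placed per round, the outer loop terminates in $O(|V|)$ rounds, and each round performs at most $O(|V|)$ max-flow computations on networks of size $O(|V|+r)$.

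Multiplying the factors gives $O(|V|) \cdot O(|V|) \cdot O((|V|+r)^3) = O(|V|^2 (|V|+r)^3)$, which is the stated bound. The main obstacle will be the careful specification of the reciprocal-edge gadget, since this is the only place where the parameter $r$ enters: one must ensure that flow paths through a reciprocal pair correspond exactly to legitimate half-trek traversals and do not double-count either side of a trek. Once this reduction is in place, correctness of the outer loop follows from the equivalence between HTC-identifiability and linear identifiability established earlier, and the complexity bound falls out of the standard analysis of max-flow.
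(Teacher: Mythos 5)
The paper does not actually prove this statement: its ``proof'' is a one-line citation to Theorem~7 of Foygel, Draisma and Drton \cite{foygel2012half}. So the only meaningful comparison is between your sketch and the cited source, and your sketch does follow the same strategy as that source: a greedy outer loop that maintains a growing set of solved vertices, an inner max-flow test for each not-yet-solved $v$ against the current allowed set, and the bookkeeping $O(|V|)\cdot O(|V|)\cdot O((|V|+r)^3)$. That outer accounting is correct.

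The genuine gap is that the entire technical content of the theorem lives in the step you defer: the construction of the flow network, the proof that integral flows of value $|\pa(v)|$ correspond exactly to systems of half-treks from $S_v$ to $\pa(v)$ with no sided intersection (so that $S_v$ can be read off a flow decomposition), and the count showing the network has $O(|V|+r)$ nodes. This is the only place the parameter $r$ enters the bound, and you acknowledge you do not know how the reciprocal-edge gadget works; as written, both the correctness of the reduction and the $O((|V|+r)^3)$ per-test cost are asserted rather than proved. Note also that the most naive construction (split each vertex into an in-copy and an out-copy with unit capacity to enforce the sided-intersection condition) has only $O(|V|)$ nodes, so you should pin down from the source exactly why the reciprocal pairs force extra nodes before claiming the stated dependence on $r$. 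A second, smaller issue: the correctness of the greedy outer loop does not follow from the equivalence of HTC- and linear identifiability established earlier in the paper; it follows from monotonicity of the half-trek criterion in the allowed set (if $S_v\subseteq A$ works and $A\subseteq A'$, then a valid $S_v\subseteq A'$ still exists), which is what guarantees that greedily solving vertices in an arbitrary feasible order never gets stuck when a valid total ordering exists. That monotonicity argument should be stated explicitly.
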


\begin{proof}
See \cite[Theorem 7]{foygel2012half}.
\end{proof}

\begin{prop}
\label{prop: complexity}
Suppose $G$ is HTC-identifiable and $S_v$ is known for each $v\in V$. Then the complexity of finding $\Lambda$ and $\Omega$ numerically is at most $O(|V|^4)$.
\end{prop}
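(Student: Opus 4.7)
The plan is to bound the work done by Algorithm \ref{algo: Lambda} vertex-by-vertex, then add the cost of the single matrix product required to recover $\Omega$ via equation (\ref{eqn: recover-Omega}). Since $S_v$ is given as input, no time needs to be spent discovering it, so the analysis reduces to bookkeeping on three ingredients at each vertex $v$: (i) building the $|\pa(v)| \times |\pa(v)|$ coefficient matrix whose entries are the $a_{s_i p_j}$, (ii) building the length-$|\pa(v)|$ right-hand side vector of entries $a_{s_i v}$, and (iii) solving the resulting linear system for the vector $(\lambda_{p_1 v}, \dots, \lambda_{p_k v})^T$.

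First I would bound the cost of evaluating a single entry $a_{ij} = \sigma_{ij} - \sum_{\ell \in \pa(i)} \lambda_{\ell i} \sigma_{\ell j}$. Each such entry is a sum of at most $|\pa(i)| + 1 \leq |V|$ terms, each of which is a product of two numbers, so $a_{ij}$ can be computed in $O(|V|)$ time, provided every $\lambda_{\ell i}$ that appears has already been produced earlier in the iteration. Because the outer loop of Algorithm \ref{algo: Lambda} processes vertices in the total ordering supplied with the HTC-identifiable graph, and because the only $\lambda$ values required to evaluate $a_{s_i p_j}$ and $a_{s_i v}$ correspond to parents of vertices that either precede $v$ or contribute a half-trek summand that vanishes by construction, this availability is guaranteed; I would verify this carefully using Lemma \ref{lem: redfn} together with the trek rule.

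Next I would sum these costs. For a fixed $v$, building the coefficient matrix requires $|\pa(v)|^2 \leq |V|^2$ entries at $O(|V|)$ each, for a total of $O(|V|^3)$; the right-hand side is even cheaper at $O(|V|^2)$. Solving the $|\pa(v)| \times |\pa(v)|$ linear system by, say, Gaussian elimination costs $O(|\pa(v)|^3) = O(|V|^3)$. Hence the per-vertex cost is $O(|V|^3)$, and summing over the $|V|$ vertices yields an overall cost of $O(|V|^4)$ to produce $\Lambda$. Finally, recovering $\Omega = (I-\Lambda)^T \Sigma (I - \Lambda)$ is two dense $|V| \times |V|$ matrix multiplications, which is $O(|V|^3)$ and thus absorbed into the same bound.

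The main obstacle is not the arithmetic count but the justification that Algorithm \ref{algo: Lambda} can in fact be executed in the claimed order when only numerical values are available: one must be sure that every $\lambda_{\ell i}$ appearing inside some $a_{s_i p_j}$ is either already computed (because its target vertex precedes $v$ in the total ordering) or is multiplied by a quantity that will cancel to zero by the half-trek structure, so that no symbolic manipulation is needed at runtime. Once this is argued, the stated $O(|V|^4)$ bound follows immediately from the naive per-vertex count above; a sharper bound would require a more careful accounting that exploits sparsity of $\pa(v)$ across vertices, but the conservative estimate already suffices for the proposition.
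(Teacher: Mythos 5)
Your argument is correct and follows essentially the same route as the paper's proof: bound each per-vertex Gauss--Jordan solve by $O(|V|^3)$, sum over the $|V|$ vertices to get $O(|V|^4)$, and absorb the $O(|V|^3)$ matrix products for $\Omega$ into that bound. Your additional accounting for constructing the entries $a_{ij}$ and your remark about the availability of previously computed $\lambda$'s (guaranteed by the total ordering and condition~3 of Algorithm~\ref{algo: quasi-linear}) are sensible refinements the paper leaves implicit, but they do not change the approach or the bound.
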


\begin{proof}
The complexity of solving $k$ equations with $k$ unknowns using naive Gauss-Jordan elimination is $O(k^3)$. In our algorithm for finding the values of $\lambda_{ij}$'s, we first solve a linear equation with at most one unknown, then two linear equations with at most two unknowns and so on until we solved $|V|-1$ linear equations with at most $|V|-1$ unknowns. Since we this process is iterated $|V|$ times, the complexity of finding the values of $\lambda_{ij}$'s is at most $O(|V|^4)$. Finally, since the complexity of matrix multiplication is $O(|V|^3)$, we can compute both $\Lambda$ and $\Omega$ in $O(|V|^4)$.
\end{proof}

In particular, the bound of $O(|V|^4)$ is attained by a complete DAG. 
%Note that faster methods for matrix inversion exists, which may lead to a lower complexity. 
Furthermore, in graphs where $S_v$ is known, Proposition \ref{prop: complexity} could be useful. Otherwise, if $S_v$ is unknown, the complexity is at most $O(|V|^2(|V|+r)^3)$.

One might notice that each unknown $\lambda_{ij}$ corresponds precisely to a directed edge in $G$. In a sparse graph where $|\D|$ is small, it might be beneficial to express the complexity in terms of $|\D|$ instead.

\begin{prop}
Suppose $G$ is HTC-identifiable and $S_v$ is known for each $v\in V$. Then the complexity of finding $\Lambda$ numerically is at most $O(|\D|^3)$.
\end{prop}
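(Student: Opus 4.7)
My plan is to refine the vertex-based counting of Proposition \ref{prop: complexity} into an edge-based one. Let $k_v=|\pa(v)|$. At each step of Algorithm \ref{algo: Lambda} the vertex $v$ is processed by solving a single $k_v\times k_v$ linear system for $\{\lambda_{p_jv}\mid p_j\in\pa(v)\}$; naive Gauss--Jordan elimination costs $O(k_v^3)$. Assembling the $k_v(k_v+1)$ coefficient entries $a_{s_ip_j}$ and $a_{s_iv}$ from the recursion (\ref{eqn: b_ij}) adds at most $O(k_v^2\cdot|\D|)$ per vertex, since each $a_{ij}$ is a sum of $|\pa(i)|\leq|\D|$ scalar products.

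The entire argument then reduces to a single counting inequality. Every directed edge contributes to exactly one parent set, so $\sum_{v\in V}k_v=|\D|$. Because the $k_v$ are non-negative integers, expanding $\bigl(\sum_v k_v\bigr)^m$ produces $\sum_v k_v^m$ together with a sum of non-negative cross terms, and therefore
$$\sum_{v\in V}k_v^m\ \leq\ \Bigl(\sum_{v\in V}k_v\Bigr)^m\ =\ |\D|^m$$
for every integer $m\geq 1$. Specialising to $m=3$ bounds the total solve cost by $O(|\D|^3)$, and to $m=2$ bounds the total assembly cost by $O(|\D|^2\cdot|\D|)=O(|\D|^3)$; adding the two yields the claimed $O(|\D|^3)$ estimate for recovering $\Lambda$ numerically.

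The only potential obstacle is ensuring that the $\lambda$-values required to evaluate $a_{s_ip_j}$ at step $v$ have already been computed, so that the per-vertex work really is $O(k_v^3+k_v^2\cdot|\D|)$ rather than something that grows by cascading symbolic substitution. This is handled exactly as in the correctness argument for Algorithm \ref{algo: Lambda}: the HTC total ordering of Definition \ref{prop: HTC-rewritten} delivers the dependency structure needed, with Lemma \ref{lem: redfn} covering the cases in which $s_i\in S_v$ is not forced to precede $v$. Once this bookkeeping is verified, the inequality above is the whole argument, and the bound is seen to be informative only in the sparse regime: for a complete DAG we have $|\D|=\binom{|V|}{2}$, so $O(|\D|^3)=O(|V|^6)$, which is weaker than the $O(|V|^4)$ bound from Proposition \ref{prop: complexity}.
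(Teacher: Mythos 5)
Your proof is correct and follows essentially the same route as the paper: the cost is decomposed per vertex into $O(k_v^3)$ Gauss--Jordan solves, and the bound comes from $\sum_v k_v = |\D|$ together with $\sum_v k_v^3 \leq \bigl(\sum_v k_v\bigr)^3$, which the paper phrases equivalently as an optimization problem maximized when all unknowns fall into one block. Your additional accounting for assembling the $a_{ij}$ entries and for the ordering of the $\lambda$-computations is a harmless refinement that the paper omits but does not change the argument.
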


\begin{proof}
This follows from the fact that we have at least $|\D|$ equations with exactly $|\D|$ unknowns. Observe that each time when we apply the Gauss-Jordan elimination, we can solve for any subset of the $|\D|$ unknowns. Therefore, we obtain the following optimization problem:
\begin{equation*}
\begin{aligned}
& \underset{k_1,\dots,k_{|\D|}}{\text{maximise}}
& & k_1^3+\dots+k_{|\D|}^3 \\
& \text{subject to}
& & k_1+\dots+k_{|\D|} = |\D| \\
&\text{where}&& k_1,\dots,k_{|\D|} \in \mathbb{Z}_{\geq 0},
\end{aligned}
\end{equation*}
where $k_i$ is the number of unknowns in the $i^{th}$ set of equations. Since the cubic term grows faster than the linear term, the maximum is obtained when $k_i=|\D|$ for some $i$ and $k_j=0$ for all $j\neq i$.
\end{proof}

\begin{cor}
Suppose $G$ is HTC-identifiable graph and $S_v$ is known for each $v\in V$. Denote $P=\max_{v\in V}|\pa(v)|$. Then the complexity of finding $\Lambda$ numerically is at most $O(|V|P^3)$.
\end{cor}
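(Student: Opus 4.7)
The plan is to refine the per-iteration cost estimate from Proposition~\ref{prop: complexity} by noting that the linear system solved at vertex $v$ has size exactly $|\pa(v)|$, not $|V|$. Algorithm~\ref{algo: Lambda} iterates once over each $v \in V$ in the topological order, and at each step it (i) assembles the matrix $A_{S_v,\pa(v)}$ and right-hand side $(a_{s_i v})_{s_i\in S_v}$, and then (ii) inverts that matrix against the right-hand side. Step (ii) dominates, and by Gauss--Jordan elimination it costs $O(|\pa(v)|^3) \leq O(P^3)$. Summing over the $|V|$ vertices gives the claimed $O(|V| P^3)$ bound.

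Concretely, I would write the proof in three short parts. First, fix a vertex $v$ with $|\pa(v)| = k \leq P$ and the associated set $S_v$ of the same size (by Definition~\ref{prop: HTC-rewritten}). Second, account for the cost of assembling the coefficient matrix: each entry $a_{ij} = \sigma_{ij} - \sum_{\ell \in \pa(i)} \lambda_{\ell i} \sigma_{\ell j}$ is a sum of at most $P+1$ already-known scalars (the $\lambda_{\ell i}$ values are available because $i \prec v$ or $i \in \pa(v)$ has $\lambda$'s that cancel in the relevant sums), so each of the $k^2 + k \leq P^2 + P$ entries can be computed in $O(P)$ operations, for a total of $O(P^3)$. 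Third, invoke the standard $O(k^3)$ bound for solving a $k \times k$ linear system to recover $\{\lambda_{p_i v} : p_i \in \pa(v)\}$; this is also $O(P^3)$.

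Multiplying by the $|V|$ iterations of the outer loop gives the bound $O(|V| P^3)$, completing the argument. The only subtlety is to justify that the previously computed $\lambda_{\ell i}$ entries needed to form $a_{ij}$ are indeed available at the time vertex $v$ is processed; this follows from the total ordering given by HTC-identifiability together with the structure of Algorithm~\ref{algo: Lambda}, which ensures that whenever we reference $\lambda_{\ell i}$ while solving at vertex $v$, the vertex $i$ has already been handled (or the relevant coefficient is trivially zero because the edge is absent). No other obstacle arises; the corollary is essentially a sharper bookkeeping of Proposition~\ref{prop: complexity}.
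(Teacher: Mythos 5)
Your argument is correct and is essentially the paper's own proof, which simply observes that Gauss--Jordan elimination is run at most $|V|$ times with at most $P$ unknowns each time, giving $O(|V|P^3)$. The extra bookkeeping you add (the $O(P^3)$ cost of assembling the coefficient matrix and the availability of previously computed $\lambda$'s via the total ordering) is sound but does not change the approach.
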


\begin{proof}
We use the Gauss-Jordan elimination at most $v$ times, solving for at most $P$ unknowns each time.
\end{proof}

\subsubsection{Symbolic Complications}

Now, we will show that our algorithm does not run symbolic calculations in polynomial time by providing a simple counter-example. Since finding the generators of $\I(G)$ requires symbolic computation, we can not compute $\I(G)$ in polynomial time in general.

Consider a complete DAG such as the graph shown in Figure \ref{fig: complete}.
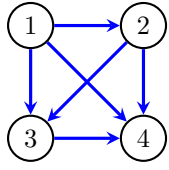
\begin{figure}
  \begin{center}
  \begin{tikzpicture}
  [rv/.style={circle, draw, thick, minimum size=6mm, inner sep=0.5mm}, node distance=15mm, >=stealth,
  hv/.style={circle, draw, thick, dashed, minimum size=6mm, inner sep=0.5mm}, node distance=15mm, >=stealth]
  \pgfsetarrows{latex-latex};
  \begin{scope}
  \node[rv]  (1)              {1};
  \node[rv, right of=1, yshift=0mm, xshift=0mm] (2) {2};
  \node[rv, below of=1, yshift=0mm, xshift=0mm] (3) {3};
  \node[rv, below of=2, yshift=0mm, xshift=0mm] (4) {4};
  \draw[->, very thick, color=blue] (1) -- (3);
  \draw[->, very thick, color=blue] (2) -- (4);
  \draw[->, very thick, color=blue] (1) -- (2);
  \draw[->, very thick, color=blue] (1) -- (4);
  \draw[->, very thick, color=blue] (2) -- (3);
  \draw[->, very thick, color=blue] (3) -- (4);
  \end{scope}
    \end{tikzpicture}
  \caption{A complete DAG on 4 vertices.}
  \label{fig: complete}
  \end{center}
\end{figure}
Using our algorithm, solving for $\Lambda$ symbolically, we obtain 
$$\lambda_{14}={\sigma_{14\cdot 23}}/{\sigma_{11\cdot 23}}, \qquad\lambda_{24}={\sigma_{24\cdot 13}}/{\sigma_{22\cdot 13}}, \qquad\lambda_{34}={\sigma_{34\cdot 12}}/{\sigma_{33\cdot 12}}.$$ In particular, the regression coefficient $\lambda_{ij}$  is precisely ${\sigma_{ij\cdot S}}/{\sigma_{ii\cdot S}}$ in a DAG, where $S=\pa(j)\backslash\{i\}$ \cite{pearl2000causality}. Recall that the conditional covariance matrix of a multivariate normal distribution is given by
\begin{align*}
\Sigma_{AA\cdot B}=\Sigma_{AA}-\Sigma_{AB}\Sigma_{BB}^{-1}\Sigma_{BA}.
\end{align*}
% \begin{prop}
% There are $n!$ different terms in $\mathrm{det}(\Sigma_{BB})$ where $n=|B|$.
% \end{prop}
%
% \begin{proof}
% We shall prove by induction. Suppose $|B|=1$. Suppose $b\in B$ is the only element. Then the $\mathrm{det}(\Sigma_{BB})$ has only one term, namely $\sigma_{bb}$.
%
% Now, suppose for $|B|=k$ we have $k!$ terms. For $|B|=k+1$, we have $$\mathrm{det}(\Sigma_{BB})=\sum\limits_{i=1}^{k+1}\sigma_{1i}\mathrm{det}(\Sigma_{B\backslash\{1\},B\backslash\{i\}})$$ where each of the determinants in the summation has $k!$ terms by induction hypothesis. Furthermore, the term $\sigma_{1i}$ is unique to each summand since it is not contained in $\mathrm{det}(\Sigma_{B\backslash\{1\},B\backslash\{j\}})$ for any $j$. Hence, none of the terms can cancel each other out and we have $(k+1)!$ terms.
% \end{proof}
%
Now, there are $|B|!$ different terms in $\mathrm{det}(\Sigma_{BB})$. Since $\sigma_{ij\cdot V}\propto \mathrm{det}(\Sigma_{-i,-j})$ which has $(|V|-1)!$ terms, the number of terms in $\lambda_{ij}$ increases factorially with respect to $|\pa(i)|$. As Algorithm \ref{algo: Lambda} requires us to invert matrices with entries as functions of $\lambda$, we might not be able to solve for $\Lambda$ symbolically in polynomial time. However, the expression may in principle factorise, and therefore we might only have to deal with $O(\log(n!))=O(n\log(n))$ terms.
%Therefore, by multiplying out $\Sigma_{BB}^{-1}$, the fraction $\frac{\sigma_{ij\cdot B}}{\sigma_{ii\cdot B}}$ has at least $|B|!$ different terms. In particular, in this specific case, we can expect the number of terms in $\lambda_{ij}$ to increase factorially with respect to $|\pa(i)|$. Now, in each step of our algorithm we are required to invert the matrix
% $$\left[\begin{array}{ccc}
%     f(j_1,j_1) & \dots & f(j_1,j_m) \\
%     \vdots &  & \vdots \\
%     f(j_m,j_1) & \dots & f(j_m,j_m)
% \end{array}\right]
% $$
% where $f(i,j)=\sigma_{ij}-\sum_{k=1}^{i-1}\lambda_{ki}\sigma_{kj}$. However, the number of terms in $\lambda_{ki}$ increases factorially. Hence, in general we will not be able to compute for $\Lambda$ and $\I(G)$ symbolically in polynomial time.

\section{Conclusion}

In this paper, we have defined a subset of generically identifiable graphs called \emph{linearly identifiable graphs}. Graphs are linearly identifiable if their model parameters can be recovered from the covariance matrix with straightforward linear algebra operations, given in Algorithm \ref{algo: Lambda} and (\ref{eqn: recover-Omega}). We have also proven that a graph is linearly identifiable if and only if it is HTC-identifiable. 

Furthermore, we have shown that, for some graphs, there is a bijection between the generators of the \emph{model ideal}, generated by all equality constraints of $\M(G)\cap PD_V$, and the vertex pairs $\{(i,j)\mid i\leftrightarrow j\notin\B,\ i\notin S_j\text{ and }j\notin S_i\}$. In graphs where the sets $\{S_v\mid v\in V\}$ has no subset cycles, the generators of the vertex pairs are minimal. To prove this, we utilise Lemma \ref{lem: add-edges}, which fails to hold in all HTC-identifiable graphs as demonstrated in Example \ref{ex: add-edges-fail}. However, we do conjecture that the generators we found are minimal for all HTC-identifiable graphs.

\begin{conj}
For any HTC-identifiable graph $G$, the generators of the model ideal $J(\M(G))$ found in Theorem \ref{thm: generators of I(G)} are minimal.
\end{conj}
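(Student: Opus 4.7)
Plan. The natural approach is to extend the contradiction argument of Theorem \ref{thm: minimal-ideal} to the setting where $G' := G + \{v \leftrightarrow w\}$ fails to be HTC-identifiable. Suppose for contradiction that $f_{vw}$ is redundant, i.e.\ $f_{vw} \in \langle f_{ij} : (i,j) \neq (v,w) \rangle$. It then suffices to exhibit a $\Sigma^\star \in \M(G')$ at which every $f_{ij}$ with $(i,j) \neq (v,w)$ vanishes while $f_{vw}(\Sigma^\star) \neq 0$; this immediately contradicts the assumption, since membership in the ideal forces $f_{vw}$ to vanish wherever the other generators do. Computing the generators for Figure \ref{fig: add-edges} by hand confirms the picture: any $\Sigma^\star \in \M(G')$ with $\omega_{vw}$ small but nonzero works, because the added edge does not create new treks contributing to the $\sigma$-entries appearing in the other generators. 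The goal is to show this behaviour is general.

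To make the argument rigorous, I would view each generator as a rational function on the parameter space $\R^{\D} \times PD_V$ via $\Sigma = (I-\Lambda)^{-T}\Omega(I-\Lambda)^{-1}$. Since $f_{ij}$ vanishes identically on the parameter locus $\{\omega_{kl}=0 : k\leftrightarrow l \notin \B\}$, its expansion in the ``forbidden'' covariances reads
\begin{equation*}
f_{ij}\bigl(\Sigma(\Lambda, \Omega)\bigr) \;=\; \sum_{k \leftrightarrow l \notin \B} h^{ij}_{kl}(\Lambda, \Omega)\, \omega_{kl} \;+\; (\text{monomials of degree} \geq 2 \text{ in the forbidden } \omega_{kl}).
\end{equation*}
Restricting to $\M(G')$ kills every forbidden $\omega_{kl}$ except $\omega_{vw}$, so to leading order $f_{ij}|_{\M(G')} = h^{ij}_{vw}(\Lambda, \Omega)\,\omega_{vw}$. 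The conjecture then reduces to two claims: (a) $h^{ij}_{vw} \equiv 0$ for every generator index $(i,j)\neq(v,w)$; and (b) $h^{vw}_{vw}$ is generically nonzero. Claim (b) is expected, since the algorithm is designed so that $f_{vw}$ recovers $\omega_{vw}$ on $\M(G)$, forcing $h^{vw}_{vw}$ to be a unit at generic parameter values.

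The hard part is Claim (a). Using the trek rule, $\partial \sigma_{ab}/\partial \omega_{vw}$ evaluated on $\Theta(G)$ equals the sum of trek monomials for treks from $a$ to $b$ whose unique bidirected edge is $v \leftrightarrow w$. I would induct along the HTC total ordering, tracking which $\sigma$-entries can appear in the rational expressions for $\lambda^\star_{\pa(u), u}$ at each processed vertex $u$, and consequently in $f_{ij} = b_{ij}(\Sigma, \lambda^\star(\Sigma))$. The combinatorial hypotheses $v \notin S_w$, $w \notin S_v$, together with $(i,j) \neq (v,w)$, should preclude any trek whose bidirected source is $v \leftrightarrow w$ from reaching the entries that drive $f_{ij}$, so $h^{ij}_{vw}$ vanishes. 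Isolating the correct inductive invariant is the delicate step: subset cycles in $\{S_u\}$, as in Example \ref{ex: add-edges-fail}, allow the added edge to feed back through downstream $\lambda^\star$'s, so one must verify that each such feedback is itself proportional to another forbidden $\omega_{kl}$ and thus is washed out upon restriction to $\M(G')$.

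A fallback route, should the trek analysis prove intractable, is dimension-theoretic. If one can show that $\overline{\M(G)}$ is an irreducible component of $V(J(\M(G)))$ rather than a proper subvariety of a larger component, then the associated minimal prime has height $\binom{|V|}{2} - |\D| - |\B|$, matching the number of generators exactly; minimality then follows from Krull's Hauptidealsatz because an ideal cannot be generated by fewer elements than the height of any of its minimal primes. The obstacle is ruling out irreducible components of $V(J(\M(G)))$ of strictly smaller codimension that would strictly contain $\overline{\M(G)}$, i.e.\ showing that the generators $\{f_{ij}\}$ do not share a common factor or otherwise admit a positive-dimensional ``parasitic'' component.
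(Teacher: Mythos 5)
This statement is an open \emph{conjecture} in the paper, not a theorem: the authors prove minimality only under the additional hypothesis that the sets $\{S_v\}$ have no subset cycles (Theorem \ref{thm: minimal-ideal}), precisely because Lemma \ref{lem: add-edges} can fail otherwise (Example \ref{ex: add-edges-fail}). So there is no proof to compare yours against, and what you have written is a plan rather than a proof; as it stands it does not close the gap. The central unproven step is your Claim (a), and the heuristic you offer for it is shaky. Adding the edge $v\leftrightarrow w$ \emph{does} create new half-treks (this is exactly why $G'$ can fail to be HTC-identifiable): at a generic point of $\M(G')$ the quantities $a_{vk}$ that Algorithm \ref{algo: Lambda} implicitly treats as zero (because in $G$ there is no half-trek from $v$ to $k$) acquire contributions proportional to $\omega_{vw}$, so the recovered $\lambda^\star(\Sigma^\star)$ differs from the true $\Lambda_0$ at first order in $\omega_{vw}$, and this error propagates into $f_{ij}=b_{ij}(\Sigma^\star,\lambda^\star)$ for the other missing edges. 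Hence ``any $\Sigma^\star\in\M(G')$ with $\omega_{vw}$ small but nonzero works'' is not justified; deciding whether the first-order coefficients $h^{ij}_{vw}$ vanish is essentially the whole content of the conjecture, and you have only gestured at an induction without isolating an invariant. Note also that when $G'$ is not HTC-identifiable the witness point need not lie in $\M(G')$ at all --- you may look for a common zero of $\{f_{ij}\}_{(i,j)\neq(v,w)}$ anywhere in $PD_V$ --- so tying the argument to $\M(G')$ may itself be the wrong move.

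Your fallback route is closer to a viable strategy, but again the hard part is unaddressed. If one could show that the ideal $\langle f_{ij}\rangle$ (in a suitable polynomial ring, after clearing the denominators of the rational functions $\lambda^\star(\Sigma)$) has height exactly $n={|V|\choose 2}-|\D|-|\B|$, then Krull's height theorem would indeed force minimality, since $n-1$ generators can only produce minimal primes of height at most $n-1$. But establishing that height is exactly the open problem: one must rule out components of $V(\langle f_{ij}\rangle)$ of smaller codimension, and the model ideal here is defined via the real semialgebraic set $\M(G)\cap PD_V$ (the authors explicitly warn that $I(V(J))\neq J$ and that the Nullstellensatz is unavailable), so passing between the real cone where the constraints are posed and the complex variety where dimension theory lives requires an argument you have not supplied. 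In short: both of your routes correctly locate the difficulty, but neither resolves it, and the statement remains open.
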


Finally, we have shown that the complexity to recover the parameters of an HTC-identifiable graph $G$, given that we know $\{S_v\mid v\in V\}$ is at most $O(|V|^4)$. Meanwhile, it is known that the complexity of finding $S_v$ for each vertex $v$ given an HTC-identifiable graph is at most $O(|V|^2(|V|+r)^3)$ where $r\leq|\D|/2$ is the number of reciprocal edge pairs in $\D$. Unfortunately, we do not have an efficient algorithm to find the sets $S_v$ without subset cycles (if it exists).

\bibliographystyle{abbrv}
\bibliography{references} % Path to your References.bib file
\end{document}